\documentclass[12pt]{amsart}


\usepackage{amssymb}
\usepackage{amsmath}
\usepackage{bm}
\usepackage{graphicx}
\usepackage{psfrag}
\usepackage{color}
\usepackage{url}
\usepackage{algpseudocode}
\usepackage{fancyhdr}
\usepackage{xy}
\usepackage{mathtools}
\usepackage[linesnumbered, ruled, vlined]{algorithm2e}
\usepackage{dirtytalk}
\usepackage{float}
\usepackage{stmaryrd}
\usepackage{hyperref}
\usepackage{cite}

\usepackage{array}
\usepackage{longtable}
\usepackage[table]{xcolor}
\input xy
\xyoption{all}

\definecolor{bleudefrance}{RGB}{205,205,250}
\definecolor{blizzardblue}{RGB}{235,235,250}


\voffset=-1.4mm
\oddsidemargin=14pt
\evensidemargin=14pt
\topmargin=26pt
\headheight=9pt     
\textheight=576pt
\textwidth=441pt
\parskip=0pt plus 4pt


\pagestyle{fancy}
\fancyhf{}

\fancyhead[CE]{\fontsize{9}{11}\selectfont H. POLO}
\fancyhead[CO]{\fontsize{9}{11}\selectfont FACTORIZATION INVARIANTS OF EXPONENTIAL PUISEUX SEMIRINGS}
\fancyhead[LE,RO]{\thepage}

\newtheorem*{maintheorem*}{Main Theorem}
\newtheorem{theorem}{Theorem}[section]
\newtheorem{prop}[theorem]{Proposition}

\newtheorem{lemma}[theorem]{Lemma}
\newtheorem{cor}[theorem]{Corollary}
\newtheorem{question}[theorem]{Question}

\theoremstyle{definition}
\newtheorem{definition}[theorem]{Definition}
\newtheorem{remark}[theorem]{Remark}
\newtheorem{example}[theorem]{Example}

\numberwithin{equation}{section}

\newcommand{\nn}{\mathbb{N}}
\newcommand{\qq}{\mathbb{Q}}
\newcommand{\rr}{\mathbb{R}}
\newcommand{\zz}{\mathbb{Z}}

\newcommand{\betti}{\text{Betti}}

\hypersetup{
	pdftitle={FD},
	colorlinks=true,
	linkcolor=cyan,
	citecolor=cyan,
	urlcolor=wine
}

\keywords{exponential Puiseux semirings, rational cyclic semirings, sets of lengths, sets of distances, catenary degrees, omega primalities, tame degrees, Betti elements, presentations}
\subjclass[2010]{Primary: 20M13; Secondary: 16Y60, 06F05, 20M14}

\begin{document}
	
	\mbox{}
	\title{Factorization Invariants of the Additive Structure of Exponential Puiseux Semirings}
	
	\author{Harold Polo}
	\address{Department of Mathematics\\University of Florida\\Gainesville, FL 32611, USA}
	\email{haroldpolo@ufl.edu}
	
	\date{\today}
	
	\begin{abstract}
		Exponential Puiseux semirings are additive submonoids of $\qq_{\geq 0}$ generated by almost all of the nonnegative powers of a positive rational number, and they are natural generalizations of rational cyclic semirings. In this paper, we investigate some of the factorization invariants of exponential Puiseux semirings and briefly explore the connections of these properties with semigroup-theoretical invariants. Specifically, we provide exact formulas to compute the catenary degrees of these monoids and show that minima and maxima of their sets of distances are always attained at Betti elements. Additionally, we prove that sets of lengths of atomic exponential Puiseux semirings are almost arithmetic progressions with a common bound, while unions of sets of lengths are arithmetic progressions. We conclude by providing various characterizations of the atomic exponential Puiseux semirings with finite omega functions; in particular, we completely describe them in terms of their presentations.
	\end{abstract}

\maketitle

\section{Introduction} \label{sec:intro}

In a unique factorization domain (or UFD) we can write a nonzero nonunit element as a product of finitely many atoms (i.e., irreducibles) and such a representation is unique up to order and units. By relaxing this property, we obtain two larger classes of integral domains that we call atomic and half-factorial following Cohn~\cite{cohn} and Zaks~\cite{Zaks1980}, respectively. An integral domain is atomic provided that every nonzero nonunit factors into atoms, while an atomic domain is half-factorial (or HFD) if any two factorizations of a nonunit element have the same length. Factorization theory studies how far is an atomic domain from being either a UFD or an HFD, and several invariants have been introduced to quantify this deviation (see, for example, \cite{ChWWS1990,aG1997,aGGl1990,Valenza}). In 1992, Halter-Koch~\cite{halter-koch1992} expanded the scope of factorization theory to the more general (and still suitable) class of cancellative and commutative monoids. 

Puiseux monoids (i.e., additive submonoids of $\qq_{\geq 0}$) are natural generalizations of numerical monoids, and they have been used as an important source of examples in factorization theory. For instance, Coykendall and Gotti~\cite{JCFG2019} utilized Puiseux monoids to partially answer a question posed by Gilmer almost forty years ago in \cite[page~$189$]{Gilmer}, while Geroldinger et al. \cite{AGFGST2019} found, in the class of Puiseux monoids, the first example of an atomic primary monoid with irrational elasticity (\cite[Example~4.3]{AGFGST2019}). Computations of factorization invariants are highly tractable in the context of finitely generated monoids (see \cite{GS2016} and references therein); however, computations are usually arduous for their non-finitely generated counterparts. For example, completely describing the sets of lengths of certain Puiseux monoids is as hard as solving the Goldbach's conjecture (\cite[Section~6]{fG2019}).

Some additive submonoids of $\rr_{\geq 0}$ have dyadic monoidal structures in the sense that they contain $1$, the multiplicative identity, and are also closed under multiplication. These monoids, called \emph{positive semirings}, have motivated much interest lately. In~\cite{JCFG}, the authors studied atomic properties of the additive structure of the positive semiring $\nn_0[\alpha] = \{f(\alpha) \mid f(X) \in \nn_0[X]\}$, where $\alpha$ is a nonzero real number, while the elasticity and delta set of $\nn_0[\tau]$, where $\tau$ is a quadratic integer, were explored in \cite{CCMS09}. Furthermore, Baeth et al.~\cite{NBSCFG} investigated the dual nature of atomicity in the context of positive semirings using a methodology introduced in ~\cite{DDDFMZ1990}. Finally, the factorization invariants of the additive structure of rational cyclic semirings (i.e., Puiseux monoids generated by the nonnegative powers of a positive rational number) were thoroughly described in~\cite{CGG19}, while the arithmetic of their multiplicative structure was considered in \cite{NBFG2020}.

The class of atomic rational cyclic semirings  is one of the rare classes of non-finitely generated monoids for which we can provide detailed descriptions for many of their factorization invariants, and various generalizations of this class (e.g., \cite{SAJBHP2020,JCFG, HP2020}) have been scrutinized to gain insight about the atomic structure and factorization invariants of positive monoids (i.e., additive submonoids of the nonnegative cone of the real line). In \cite{SAJBHP2020}, Albizu-Campos et al. studied the atomic properties of \emph{exponential Puiseux semirings}, that is, Puiseux monoids generated by almost all of the nonnegative powers of a positive rational number. The purpose of the present article is to show that atomic exponential Puiseux semirings retain some of the nice factorization properties of atomic rational cyclic semirings.

We start by introducing the necessary background and notation to follow our exposition. In Section~$3$, we describe the $\mathcal{R}$-classes of atomic exponential Puiseux semirings, which allows us to compute their catenary degrees. Additionally, we show that the minimum and maximum of $\Delta(M)$ are attained at Betti elements for every atomic exponential Puiseux semiring $M$. Then, in Section $4$, we investigate the sets of lengths of these monoids. To be precise, we prove that if $M$ is a nontrivial atomic exponential Puiseux semiring then there exists $B \in \nn$ such that every $L \in \mathcal{L}(M)$ is an AAP with difference $\min\Delta(M)$ and bound $B$, while for each $k \in \nn^{\bullet}$ the set $\mathcal{U}_k(M)$ is an arithmetic sequence with difference $\min\Delta(M)$. We conclude by offering, in the last section, several characterizations of atomic exponential Puiseux semirings with finite omega functions; in particular, we completely describe them in terms of their presentations.

\section{Fundamentals} \label{sec:background}

We now present the concepts and notation related to our exposition. Reference material on non-unique factorization theory can be found in the monograph~\cite{GH06b} by Geroldinger and Halter-Koch. 

\subsection{Notation}

Let $\nn$ denote the set of nonnegative integers, and let $\mathbb{P}$ denote the set of prime numbers. Additionally, if $X$ is a subset of the rational numbers then we set $\overline{X} \coloneqq X \cup \{\infty\}$ and $X_{<q} \coloneqq \{x \in X \mid 0 \leq x < q\}$; we define $X_{\leq q}$, $X_{>q}$, and $X_{\geq q}$ in a similar way. For a positive rational number $r = n/d$ with $n$ and $d$ relatively prime positive integers, we call $n$ the \emph{numerator} and $d$ the \emph{denominator} of $r$, and we set $\mathsf{n}(r) := n$ and $\mathsf{d}(r) := d$. For nonnegative integers $k$ and $m$, we denote by $\llbracket k,m \rrbracket$ the set of integers between $k$ and $m$, i.e., 
\[
	\llbracket k,m \rrbracket \coloneqq \left\{ s \in \nn \mid k \leq s \leq m \right\}.
\]
Given $L, L_1, \ldots, L_n \subseteq \mathbb{Z}$, we denote by $L_1 + \cdots + L_n$ the set $\left\{ l_1 + \cdots + l_n \mid l_i \in L_i \right\}$ and, for $l \in \zz$, we set $l + L \coloneqq \{l\} + L$.

\subsection{Exponential Puiseux Semirings}

Unless we specify otherwise, a \emph{monoid} is defined to be a semigroup with identity that is cancellative, commutative, and \emph{reduced} (i.e., its only invertible element is the identity) and, throughout this paper, we use additive notation for monoids. Now let $M$ be a monoid. We denote by $\mathcal{A}(M)$ the set consisting of elements $a \in M^{\bullet} \coloneqq M\setminus\{0\}$ satisfying that if $a = x + y$ for some $x,y \in M$ then either $x = 0$ or $y = 0$; the elements of this set are called \emph{atoms}. For a subset $G \subseteq M$, we denote by $\langle G \rangle$ the smallest submonoid of $M$ containing $G$, and if $M = \langle G \rangle$ then it is said that $G$ is a \emph{generating set} of $M$. A monoid $M$ is \emph{atomic} if $M = \langle\mathcal{A}(M)\rangle$. For $x,y \in M$, it is said that $x$ \emph{divides} $y$ if there exists $x' \in M$ such that $y = x + x'$\! in which case we write $x \,|_M \,y$ and drop the subscript whenever $M = (\nn, \times)$. A subset $I$ of $M$ is an \emph{ideal} of $M$ provided that $I + M \subseteq I$. An ideal $I$ is \emph{principal} if $I = x + M$ for some $x \in M$. Furthermore, it is said that $M$ satisfies the \emph{ascending chain condition on principal ideals} (or \emph{ACCP}) if every increasing sequence of principal ideals of $M$ eventually stabilizes. 

A \emph{numerical monoid} $S$ is an additive submonoid of $\nn$ whose complement in $\nn$ is finite; the greatest integer that is not an element of $S$ is called the \emph{Frobenius number} of $S$ and is denoted by $F(S)$. It is well known that numerical monoids are always finitely generated and, therefore, atomic. An introduction to numerical monoids can be found in~\cite{GSJCR2009}. On the other hand, \emph{Puiseux monoids} are additive submonoids of $\qq_{\ge 0}$, so they are natural generalizations of numerical monoids. The factorization invariants of these monoids have received considerable attention during the past five years (see, for instance,~\cite{fG2019, FGCO, MGotti}). A particularly captivating class of Puiseux monoids is the one consisting of monoids generated by the nonnegative powers of a positive rational number. These monoids, called \emph{rational cyclic semirings}, were first considered in \cite{GG17}, and a detailed description of their factorization invariants was provided in \cite{CGG19}. See \cite{CGG2019} for a friendly survey about Puiseux monoids.

\begin{definition} \label{def: exponential PM}
	Take $r \in \qq_{>0}$ and let $S$ be a numerical monoid. We let $M_{r,S}$ denote the monoid
	\[
	M_{r,S} \coloneqq \left\langle r^k \mid k \in S \right\rangle\!,
	\]
	which we call \emph{exponential Puiseux semiring}. For $n \in \nn$, we denote by $s_n$ the $(n+1)$th smallest element of $S$ and set $\delta_n \coloneqq s_{n+1} - s_n$.
\end{definition}

With notation as in Definition~\ref{def: exponential PM}, if $r$ is a positive integer then we say that $M_{r,S}$ (which is just $\nn$) is the \emph{trivial} exponential Puiseux semiring. By virtue of \cite[Proposition~5.1]{SAJBHP2020}, the monoid $M_{r,S}$ is, in fact, a \emph{positive semiring} (i.e., an additive submonoid of the real line containing $1$ and closed under multiplication). Clearly, exponential Puiseux semirings are generalizations of rational cyclic semirings, and most of them are atomic as the next proposition indicates.

\begin{prop} \cite[Proposition~3.7]{SAJBHP2020} \label{prop:atomicity of exponential PMs}
	Let $M_{r,S}$ be a nontrivial exponential Puiseux semiring. The following statements hold. 
	\begin{enumerate}
		\item If $\mathsf{n}(r)>1$ and $\mathsf{d}(r)>1$ then $M_{r,S}$ is atomic and $\mathcal{A}(M_{r,S}) = \{r^s \mid s \in S\}$.
		\item If $\mathsf{n}(r)=1$ and $\mathsf{d}(r)>1$ then $M_{r,S}$ is not atomic and $\mathcal{A}(M_{r,S}) = \emptyset$.
	\end{enumerate}
\end{prop}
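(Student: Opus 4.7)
The plan is to handle the two parts separately. For part (1), I would reduce to the setting of the rational cyclic semiring $S_{r,\nn}$, whose atomic structure is already known from the analysis in \cite{CGG19}; for part (2), I would give a direct decomposition argument using that $r = 1/d$.

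For part (1), observe that $S_{r,N} \subseteq S_{r,\nn}$ as additive submonoids of $\qq_{\ge 0}$. Moreover, if $a \in S_{r,N}$ is an atom of the larger monoid $S_{r,\nn}$, then $a$ is automatically an atom of $S_{r,N}$: any decomposition $a = y + z$ with $y, z \in S_{r,N}$ also holds in $S_{r,\nn}$, forcing $y = 0$ or $z = 0$. From the analysis of rational cyclic semirings in \cite{CGG19}, when $n, d > 1$ one has $S_{r,\nn}$ atomic with $\mathcal{A}(S_{r,\nn}) = \{r^k : k \in \nn\}$. Consequently, each $r^k$ with $k \in N$ is an atom of $S_{r,N}$. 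The reverse inclusion is immediate: any atom of $S_{r,N}$ must lie in the generating set $\{r^k : k \in N\}$, since any expression $\sum_i c_i r^{k_i}$ with $\sum_i c_i \ge 2$ is manifestly a sum of two nonzero elements of $S_{r,N}$. Hence $\mathcal{A}(S_{r,N}) = \{r^k : k \in N\}$, and atomicity follows because $S_{r,N}$ is generated by this set.

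For part (2), with $r = 1/d$ and $d \ge 2$, the identity $r^j = d\, r^{j+1}$ for every $j \ge 0$ is the key observation. Given any nonzero $x = \sum_i c_i r^{k_i} \in S_{r,N}$, pick any index $i$ and, using cofiniteness of $N$ in $\nn$, choose $m \ge 1$ with $k_i + m \in N$. Then $c_i r^{k_i} = c_i d^m\, r^{k_i + m}$ with $c_i d^m \ge 2$, and the split
\[
c_i d^m\, r^{k_i + m} \;=\; r^{k_i + m} + (c_i d^m - 1)\, r^{k_i + m}
\]
expresses $x$ as a sum of two nonzero elements of $S_{r,N}$. Hence $\mathcal{A}(S_{r,N}) = \emptyset$, and since $S_{r,N}$ is nontrivial it cannot be atomic.

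The principal point of leverage in (1) is that atoms of the ambient rational cyclic semiring $S_{r,\nn}$ restrict to atoms of the submonoid $S_{r,N}$, which lets us inherit the explicit atom description of $S_{r,\nn}$ from \cite{CGG19} verbatim and circumvent the delicate $p$-adic and size estimates that a first-principles proof would demand (in particular, the awkward case where a hypothetical nontrivial representation $r^k = \sum c_i r^{k_i}$ has $\max_i k_i > k$, requiring iterated reductions modulo powers of $d$ against the size bound $c_{i^\ast} \le d^{K-k}/n^{K-k}$). In (2), the only subtlety is ensuring that the shifted exponent $k_i + m$ lies in $N$, which is automatic from $N$ being a numerical monoid.
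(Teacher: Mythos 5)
This proposition is stated in the paper as a citation to \cite[Proposition~3.7]{SAJBHP2020} and carries no in-paper proof, so there is no local argument to compare yours against; I can only assess the correctness of what you wrote. Your argument is correct. In part (1), the key observation --- that an atom of the ambient rational cyclic semiring $S_{r,\nn}$ which happens to lie in the submonoid $S_{r,N}$ is automatically an atom of $S_{r,N}$, because any decomposition in $S_{r,N}$ is also a decomposition in $S_{r,\nn}$ --- cleanly imports the known atom description of $S_{r,\nn}$ and, together with the elementary fact that atoms of a monoid always lie in any generating set, yields both inclusions and atomicity at once. In part (2), inflating $c_i r^{k_i}$ to $c_i d^m r^{k_i+m}$ with $k_i + m \in N$ (possible since $N$ is cofinite in $\nn$) and peeling off one copy of the generator $r^{k_i+m}$ correctly exhibits every nonzero element as a sum of two nonzero elements, so $\mathcal{A}(S_{r,N}) = \emptyset$ and atomicity fails. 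One small stylistic note: the proposition as printed in this paper has a typo, writing $\{r^s \mid s \in S\}$ where it means $\{r^s \mid s \in N\}$; you correctly read through this.
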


\subsection{Factorizations}

For the rest of the section, let $M$ be an atomic monoid. The \emph{factorization monoid} of $M$, denoted by $\mathsf{Z}(M)$, is the free (commutative) monoid on $\mathcal{A}(M)$. The elements of $\mathsf{Z}(M)$ are called \emph{factorizations}, and if $z = a_1 + \cdots + a_n \in \mathsf{Z}(M)$ for $a_1, \ldots, a_n \in\mathcal{A}(M)$ then it is said that the \emph{length} of $z$, denoted by $|z|$, is $n$. We assume that the empty factorization has length $0$. The unique monoid homomorphism $\pi\colon\mathsf{Z}(M) \to M$ satisfying that $\pi(a) = a$ for all $a \in\mathcal{A}(M)$ is called the \emph{factorization homomorphism} of $M$. For each $x \in M$, there are two important sets associated to $x$:
\[
\mathsf{Z}_M(x) \coloneqq \pi^{-1}(x) \subseteq \mathsf{Z}(M) \hspace{0.6 cm}\text{ and } \hspace{0.6 cm}\mathsf{L}_M(x) \coloneqq \{|z| : z \in\mathsf{Z}_M(x)\},
\]
which are called the \emph{set of factorizations} of $x$ and the \emph{set of lengths} of $x$, respectively; we drop the subscript whenever the monoid is clear from the context. Note that $\mathsf{L}(0) = \{0\}$. Additionally, the \emph{system of sets of lengths} of $M$ is defined by
\[
	\mathcal{L}(M) \coloneqq \{\mathsf{L}(x) \mid x \in M\}.
\]
See~\cite{aG16} for a survey on sets of lengths. It is said that $M$ is a \emph{finite factorization monoid} (or an FFM) if $\mathsf{Z}(x)$ is nonempty and finite for all $x \in M$. Similarly, $M$ is a \emph{bounded factorization monoid} (or BFM) if $\mathsf{L}(x)$ is nonempty and finite for all $x \in M$. We say that an element $x \in M$ \emph{divides} a factorization $z \in \mathsf{Z}(M)$ provided that $x$ divides $\pi(z)$ in $M$.

\subsection{Presentations and Betti Elements}

Let $\sigma$ be a subset of $M \times M$. Then we set 
\[
	\sigma^{t} := \{(s + x, r + x) \mid (s,r) \in \sigma \ \text{and} \ x \in M\}.
\]
On the other hand, we let $\sigma^{e}$ denote the smallest (under inclusion) equivalence relation on $M$ containing $\sigma$. The relation $\sigma$ on $M$ is called a \emph{congruence} provided that $\sigma$ is an equivalence relation satisfying that $\sigma^t = \sigma$. The congruence \emph{generated by} $\sigma$, denoted by $\sigma^\sharp$, is the smallest (under inclusion) congruence on $M$ containing $\sigma$. 

\begin{prop} \cite[Proposition~1.5.9]{howie} \label{prop:congruence generated description}
	If $M$ is a monoid and $\rho \subseteq M \times M$ then $\rho^\sharp = (\rho^t)^e$\!.
\end{prop}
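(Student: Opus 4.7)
The plan is to prove the equality $\rho^\sharp = (\rho^t)^e$ by establishing both inclusions separately. The reverse inclusion $\rho^\sharp \supseteq (\rho^t)^e$ is the easier direction: any congruence $\sigma$ containing $\rho$ must, by translation-invariance, satisfy $\sigma \supseteq \rho^t$, and being already an equivalence relation it must also contain the equivalence closure $(\rho^t)^e$. Applying this with $\sigma = \rho^\sharp$ gives $\rho^\sharp \supseteq (\rho^t)^e$.

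The main content is the forward inclusion $\rho^\sharp \subseteq (\rho^t)^e$, which I would obtain by checking that $(\rho^t)^e$ is itself a congruence containing $\rho$ and then invoking minimality of $\rho^\sharp$. That $(\rho^t)^e$ contains $\rho$ is immediate from taking $x = 0$ in the definition of $\rho^t$, which yields $\rho \subseteq \rho^t \subseteq (\rho^t)^e$, and $(\rho^t)^e$ is an equivalence relation by construction. The only substantive point is translation-invariance: given $(a,b) \in (\rho^t)^e$ and $x \in M$, one must show $(a+x, b+x) \in (\rho^t)^e$.

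To handle translation-invariance, I would unwind the equivalence closure as a zig-zag chain. Membership $(a,b) \in (\rho^t)^e$ means that there is a finite sequence $a = c_0, c_1, \ldots, c_n = b$ in $M$ such that, for each $i \in \llbracket 0, n-1 \rrbracket$, either $(c_i, c_{i+1}) \in \rho^t$ or $(c_{i+1}, c_i) \in \rho^t$. For a single link $(c_i, c_{i+1}) \in \rho^t$ we may write $c_i = s + y$ and $c_{i+1} = r + y$ with $(s,r) \in \rho$ and $y \in M$; then $c_i + x = s + (y+x)$ and $c_{i+1} + x = r + (y+x)$, so $(c_i + x, c_{i+1} + x) \in \rho^t$ (the commutativity of $M$ assumed throughout the paper is what allows the additive constant $y + x$ to be well-defined regardless of the side of translation). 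Translating the entire chain by $x$ produces a witness for $(a+x, b+x) \in (\rho^t)^e$, completing the verification.

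I do not expect any real obstacle here; the argument is a routine unwinding of definitions, with the commutativity of $M$ doing the modest amount of work required. The only thing to be careful about is keeping the bookkeeping of the zig-zag chain straight when handling links whose orientation is reversed, but this does not affect the argument since $\rho^t$ is translated link-by-link in either orientation.
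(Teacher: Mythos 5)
The paper does not give its own proof of this proposition; it is cited directly from Howie's book \cite[Proposition 1.5.9]{howie}. Your proposed proof is correct, complete, and is the standard argument one would find in Howie: you verify that $(\rho^t)^e$ is itself a congruence containing $\rho$ (obtaining $\rho^\sharp \subseteq (\rho^t)^e$ by minimality), and that any congruence containing $\rho$ must contain $\rho^t$ by translation-invariance and hence $(\rho^t)^e$ by being an equivalence relation (obtaining the reverse inclusion). The only nontrivial step, translation-invariance of $(\rho^t)^e$, is handled correctly by translating a zig-zag chain link by link, and you rightly flag that commutativity of $M$ is what makes a single translation closure suffice; in a noncommutative monoid one would need closure under both left and right translations and the definition of $\rho^t$ would have to be adjusted accordingly.
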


Let $\psi \colon M \to M'$ be a monoid homomorphism. Then $\ker \psi := \{(a,b) \in M \times M \mid \psi(a) = \psi(b)\}$ is called the \emph{kernel congruence} of $M$ with respect to $\psi$. The kernel congruence of the factorization homomorphism $\pi \colon \mathsf{Z}(M) \to M$ is denoted by $\sim_M$. If $\rho \subseteq \mathsf{Z}(M) \times \mathsf{Z}(M)$ generates $\sim_M$ in the sense that $\sim_M$ is the smallest (under inclusion) congruence of $\mathsf{Z}(M)$ containing $\rho$ then it is said that $\rho$ is a \emph{presentation} of $M$. 

Given two factorizations $z = \sum_{a \in \mathcal{A}(M)} \alpha_a a$ and $z' = \sum_{a \in \mathcal{A}(M)} \beta_a a$ in $\mathsf{Z}(x)$, we set $\gcd(z,z') \coloneqq \sum_{a \in \mathcal{A}(M)} \min(\alpha_a, \beta_a)a$ and $\mathsf{d}(z,z') \coloneqq \max \{|z|, |z'| \}- |\gcd(z,z')|$. For $x \in M$ and $z,z' \in \mathsf{Z}(x)$, a \emph{connecting chain of factorizations} or, simply, a  \emph{connecting chain} from $z$ to $z'$ is an ordered chain of factorizations $z = z_1,\dots,z_n = z' \in \mathsf{Z}(x)$ such that $\gcd(z_i, z_{i+1}) \in \mathsf{Z}(M)^\bullet$ for each $i \in \llbracket 1, n - 1 \rrbracket$. Now consider the binary relation $\mathcal{R}$ on $\mathsf{Z}(M)$ defined as follows: $(z,z') \in \mathcal{R}$ if there exists a connecting chain from $z$ to $z'$\!. Evidently, $\mathcal{R} \subseteq \ker \pi$. Given $x \in M$, we denote by $\mathcal{R}_x$ the set consisting of all $\mathcal{R}$-classes of $\mathsf{Z}(x)$, and $x$ is called a \emph{Betti element} of $M$ provided that $|\mathcal{R}_x| > 1$. The set of Betti elements of $M$ is denoted by $\betti(M)$.

\section{Betti Elements, Catenary Degrees, and Sets of Distances}

Several papers have been devoted to study the \emph{catenary degree} (see definition below) in atomic monoids (e.g., \cite{CGSLL2012,CGSLLPR2006,AGDGWAS2011,AGQZ2015}). However, exact formulas to calculate this invariant are hard to come by, even in the context of finitely generated monoids.

\begin{definition}
	Let $M$ be an atomic monoid, and let $x$ be an element of $M$.
	
	\begin{enumerate}
		\item For $n \in \overline{\nn}$, a finite sequence $z_1, \ldots, z_k \in \mathsf{Z}(x)$ is called an \emph{$n$-chain of factorizations} connecting $z_1$ with $z_k$ if $\mathsf{d}(z_{i},z_{i + 1}) \leq n$ for each $i \in \llbracket 1,k - 1 \rrbracket$. 
		\item The \emph{catenary degree} of $x$, denoted by $\mathsf{c}(x)$, is the smallest $n \in \overline{\nn}$ for which any two factorizations in $\mathsf{Z}(x)$ are connected through an $n$-chain. 
		\item The \emph{catenary degree} of $M$\!, denoted by $\mathsf{c}(M)$, is defined by $\mathsf{c}(M) \coloneqq \sup\{\mathsf{c}(x) \mid x \in M\}$.
	\end{enumerate}
\end{definition}

In this section, we describe the $\mathcal{R}$-classes of nontrivial atomic exponential Puiseux semirings. This will allow us to compute their catenary degrees via \cite[Corollary~9]{philipp2010}, an approach considerable different than that adopted in \cite[Corollary~3.4]{CGG19} to compute the catenary degrees of atomic rational cyclic semirings. But first let us collect two technical lemmas; the first one was borrowed from \cite{SAJBHP2020} (Lemma~3.8), and the second one is its counterpart for the case where $r > 1$.

\begin{lemma} \cite[Lemma~3.8]{SAJBHP2020} \label{lemma: algebraic manipulations for r < 1}
	Let $x$ be a nonzero element of an atomic exponential Puiseux semiring $M_{r,S}$ with $r \in \qq_{<1}$, and consider a factorization $z=\sum_{i=0} ^n c_i r^{s_i} \in\mathsf{Z}(x)$ with coefficients $c_0, \ldots, c_n \in \nn$. The following conditions hold.
	\begin{enumerate}
		\item $\min \mathsf{L}(x) =  |z|$ if and only if $c_i < \mathsf{d}(r)^{\delta_{i-1}}$ for each $i \in \llbracket 1,n \rrbracket$. 
		\item There exists exactly one factorization $z_{\min} \in \mathsf{Z}(x)$ of minimum length.
		\item $\max \mathsf{L}(x) = |z|$ if and only if $c_i < \mathsf{n}(r)^{\delta_i}$ for each $i\in\llbracket 0,n \rrbracket$.
		\item There exists, at most, one factorization $z_{\max} \in \mathsf{Z}(x)$ of maximum length.
		\item If $c_i < \mathsf{n}(r)$ for every $i \in \llbracket 0,n \rrbracket$ then $| \mathsf{Z}(x)| =1$.
	\end{enumerate}
\end{lemma}

\begin{lemma} \label{lemma: algebraic manipulations for r > 1}
	Let $x$ be a nonzero element of an atomic exponential Puiseux semiring $M_{r,S}$ with $r \in \qq_{>1} \setminus \nn$, and consider a factorization $z=\sum_{i=0} ^n c_i r^{s_i} \in\mathsf{Z}(x)$ with coefficients $c_0, \ldots, c_n \in \nn$. The following conditions hold.
	\begin{enumerate}
		\item $\min \mathsf{L}(x) =  |z|$ if and only if $c_i < \mathsf{n}(r)^{\delta_{i}}$ for each $i \in \llbracket 0,n \rrbracket$. 
		\item There exists exactly one factorization $z_{\min} \in \mathsf{Z}(x)$ of minimum length.
		\item $\max \mathsf{L}(x) = |z|$ if and only if $c_i < \mathsf{d}(r)^{\delta_{i - 1}}$ for each $i\in\llbracket 1,n \rrbracket$.
		\item There exists exactly one factorization $z_{\max} \in \mathsf{Z}(x)$ of maximum length.
		\item If $c_i < \mathsf{d}(r)$ for every $i \in \llbracket 0,n \rrbracket$ then $| \mathsf{Z}(x)| =1$.
	\end{enumerate}
\end{lemma}

\begin{proof}
	The proofs of $(1)$, $(2)$, $(3)$, and $(4)$ are left to the reader as they mimick the proof of the corresponding parts of~\cite[Lemma~3.2]{CGG19}. And $(5)$ readily follows from $(1)$ and $(3)$.
\end{proof}

For the rest of the paper we assume, without explicitly mentioning, that given an element $x$ in an atomic exponential Puiseux semiring $M_{r,S}$, the set $\mathsf{Z}(x)$ contains exactly one factorization of minimum length; we also assume that $\mathsf{Z}(x)$ contains exactly one factorization of maximum length provided that $r > 1$.

\begin{remark} \label{remark: any factorization can be transformed into the factorization of minimum length}
	Let $M_{r,S}$ be a nontrivial atomic exponential Puiseux semiring, and let $x$ be an element of $M_{r,S}$. Given $z \in \mathsf{Z}(x)$, there exist factorizations $z = z_1, \ldots, z_k = z_{\min} \in \mathsf{Z}(x)$ such that $z_{\min}$ is the factorization of minimum length of $x$ and, for each $i \in \llbracket 1, k - 1 \rrbracket$, we have $|z_{i}| - |z_{i + 1}| = |\mathsf{n}(r)^{\delta_m} - \mathsf{d}(r)^{\delta_m}|$ for some $m \in \nn$. We can generate such a sequence of factorizations iteratively: set $z_1 \coloneqq z$, and assume that we already defined $z_m  = \sum_{i = 0}^{n} c_i r^{s_i} \in \mathsf{Z}(x)$ for some $m \in \nn^{\bullet}$. If $z_m$ is not the factorization of minimum length of $x$ and $r < 1$ (resp. $r > 1$) then $c_j \geq \mathsf{d}(r)^{\delta_{j - 1}}$ (resp. $c_j \geq \mathsf{n}(r)^{\delta_j}$) for some $j \in \llbracket 1,n \rrbracket$ (resp. $j \in \llbracket 0,n \rrbracket$) by Lemma~\ref{lemma: algebraic manipulations for r < 1} (resp. Lemma~\ref{lemma: algebraic manipulations for r > 1}). By applying the transformation
	\[
	\mathsf{d}(r)^{\delta_{j - 1}}r^{s_{j}} = \mathsf{n}(r)^{\delta_{j - 1}}r^{s_{j - 1}}\, (\text{resp. } \mathsf{n}(r)^{\delta_{j}}r^{s_{j}} = \mathsf{d}(r)^{\delta_{j}}r^{s_{j + 1}}),
	\]
	we can generate a factorization $z_{m + 1} \in \mathsf{Z}(x)$ satisfying $|z_m| - |z_{m + 1}| = |\mathsf{n}(r)^{\delta_t} - \mathsf{d}(r)^{\delta_t}|$ for some $t \in \nn$. Consequently, the inequality $|z_m| > |z_{m + 1}|$ holds which, in turn, implies that these iterations eventually stop. By Lemma~\ref{lemma: algebraic manipulations for r < 1} (resp. Lemma~\ref{lemma: algebraic manipulations for r > 1}), the last factorization we obtained is precisely the factorization of minimum length of $x$. A similar statement is true, \emph{mutatis mutandis}, if we take $r > 1$ and $z_k \in \mathsf{Z}(x)$ to be the factorization of maximum length of $x$. 
\end{remark}  

The following example illustrates how to go from one factorization to another of the same element using the transformations described in Remark~\ref{remark: any factorization can be transformed into the factorization of minimum length}.

\begin{example}
	Let $r = 2/3$ and $S = \{0, 5, 6, 8, 9\} \cup \nn_{\geq 11}$, and consider the Puiseux semiring $M_{r,S}$. Clearly, $z = 3(2/3)^6 + 2(2/3)^8$, $z_{\min} = 2(2/3)^5 + 2(2/3)^8$, and $z' = 2(2/3)^5 + 3(2/3)^9$ are factorizations of the same element $x \in M_{r,S}$. Note that $z_{\min}$ is, in fact, the factorization of minimum length of $x$. By performing the transformations $3(2/3)^6 = 2(2/3)^5$ and $2(2/3)^8 = 3(2/3)^9$, we can go from the factorization $z$ to $z_{\min}$ and from $z_{\min}$ to $z'$, respectively. 
\end{example}

We are now in a position to describe the $\mathcal{R}$-classes of nontrivial atomic exponential Puiseux semirings.

\begin{prop} \label{prop: betti elements}
	Let $M_{r,S}$ be a nontrivial atomic exponential Puiseux semiring. Let $x \in M_{r,S}$, and set $R_x(r^{s_m}) \coloneqq \mathsf{Z}(x) \cap (r^{s_m} + \mathsf{Z}(M_{r,S}))$ for $m \in \nn$. The following statements hold.
	\begin{enumerate}
		\item $\betti\,(M_{r,S}) = \left\{\mathsf{n}(r)^{\delta_n}r^{s_n} \mid n \in \nn\right\}$.
		\item If $x = \mathsf{n}(r)^{\delta_n}r^{s_n}$ for some $n \in \nn$ then $\mathcal{R}_{x} = \left\{R_x(r^{s_n}), R_x(r^{s_{n + 1}})\right\}$.
	\end{enumerate}
\end{prop}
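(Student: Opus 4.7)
The plan is to derive both parts from a single structural lemma that cleanly splits $\mathsf{Z}(x_n)$ into two halves, where $x_n := \mathsf{n}(r)^{\delta_n} r^{s_n}$. The only nontrivial identity among the generators is $\mathsf{n}(r)^{\delta_k} r^{s_k} = \mathsf{d}(r)^{\delta_k} r^{s_{k + 1}}$; it is responsible for the two factorizations $z_0 = \mathsf{n}(r)^{\delta_n} r^{s_n}$ and $z^{*} = \mathsf{d}(r)^{\delta_n} r^{s_{n + 1}}$ of $x_n$ which share no atom, and the coprimality of $\mathsf{n}(r)$ and $\mathsf{d}(r)$ is the engine that turns this single identity into a clean dichotomy on supports.

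The key lemma I would prove first is the following ``no-mixing'' statement: every $z = \sum_{i = 0}^{K} c_i r^{s_i} \in \mathsf{Z}(x_n)$ satisfies $\mathrm{supp}(z) \subseteq \llbracket 0, n \rrbracket$ or $\mathrm{supp}(z) \subseteq \llbracket n + 1, K \rrbracket$. Multiplying $\sum c_i r^{s_i} = x_n$ by $\mathsf{d}(r)^{s_K}$ gives the integer identity
\[
A_L + A_H \;=\; \mathsf{n}(r)^{s_{n + 1}} \mathsf{d}(r)^{s_K - s_n},
\]
where $A_L := \sum_{i \leq n} c_i \mathsf{n}(r)^{s_i} \mathsf{d}(r)^{s_K - s_i}$ and $A_H := \sum_{i \geq n + 1} c_i \mathsf{n}(r)^{s_i} \mathsf{d}(r)^{s_K - s_i}$. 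Every summand of $A_H$ is divisible by $\mathsf{n}(r)^{s_{n + 1}}$, so $\mathsf{n}(r)^{s_{n + 1}} \mid A_L$; every summand of $A_L$ is divisible by $\mathsf{d}(r)^{s_K - s_n}$, so $\mathsf{d}(r)^{s_K - s_n} \mid A_L$. Coprimality then gives $\mathsf{n}(r)^{s_{n + 1}} \mathsf{d}(r)^{s_K - s_n} \mid A_L$, and the sandwich $0 \leq A_L \leq \mathsf{n}(r)^{s_{n + 1}} \mathsf{d}(r)^{s_K - s_n}$ forces $A_L = 0$ or $A_H = 0$, as desired.

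From the lemma part (2) follows quickly. A parallel divisibility argument in the case $A_H = 0$ rules out $c_n = 0$ (otherwise $\mathsf{d}(r)^{s_K - s_{n - 1}}$ would divide the total value $\mathsf{n}(r)^{s_{n + 1}} \mathsf{d}(r)^{s_K - s_n}$, which via coprimality and $s_{n - 1} < s_n$ is impossible), and symmetrically forces $c_{n + 1} > 0$ in the case $A_L = 0$; hence $\mathsf{Z}(x_n) = R_{x_n}(r^{s_n}) \sqcup R_{x_n}(r^{s_{n + 1}})$. Each of these two sets sits inside a single $\mathcal{R}$-class since any two of its members share the atom $r^{s_n}$ (respectively, $r^{s_{n + 1}}$), and no connecting chain can pass between them because two consecutive factorizations in a chain must share some atom $r^{s_m}$, which by the lemma forces both of their supports to lie on the same side of the index-$n$ cut. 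So $\mathcal{R}_{x_n} = \{R_{x_n}(r^{s_n}), R_{x_n}(r^{s_{n + 1}})\}$. For~(1), the inclusion $\supseteq$ is immediate from $|\mathcal{R}_{x_n}| = 2$; for $\subseteq$, given $x \in S_{r,N}^{\bullet}$ not of the form $\mathsf{n}(r)^{\delta_k} r^{s_k}$ and any $z, z' \in \mathsf{Z}(x)$, Remark~\ref{remark: any factorization can be transformed into the factorization of minimum length} produces a chain from $z$ to $z'$ through the minimum-length (or, for $r > 1$, maximum-length) factorization in which consecutive factorizations differ by a single application of the identity $\mathsf{n}(r)^{\delta_k} r^{s_k} = \mathsf{d}(r)^{\delta_k} r^{s_{k + 1}}$; a direct coefficient comparison shows that such a pair fails to share an atom only when $\pi(z_i) = \mathsf{n}(r)^{\delta_k} r^{s_k}$, contradicting our hypothesis on $x$.

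The main obstacle is the no-mixing lemma; the rest is routine once it is in hand. The delicate point is recognising that $A_L$ inherits the $\mathsf{n}(r)$-divisibility from $A_H$ through the total $A_L + A_H$ \emph{and} carries the $\mathsf{d}(r)$-divisibility automatically from its own summands, so that coprimality produces a divisor of $A_L$ matching the total exactly; the same proof then covers both $r < 1$ and $r > 1$ without change.
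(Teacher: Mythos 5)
Your proof is correct, and the engine driving it --- the ``no-mixing'' lemma established by clearing denominators and invoking coprimality of $\mathsf{n}(r)$ and $\mathsf{d}(r)$ --- is a genuinely different route from the paper's. The paper reaches the same structural conclusion (no factorization of $x_n=\mathsf{n}(r)^{\delta_n}r^{s_n}$ uses atoms on both sides of the index-$n$ cut, and $r^{s_n}$, resp.\ $r^{s_{n+1}}$, appears in every factorization on its side) by chasing the transformations of Remark~\ref{remark: any factorization can be transformed into the factorization of minimum length} toward the minimum- (for $r<1$) or maximum- (for $r>1$) length factorization, then arguing that a mixed factorization would force an application of $\mathsf{d}(r)^{\delta_n}r^{s_{n+1}}=\mathsf{n}(r)^{\delta_n}r^{s_n}$ somewhere along the chain, a contradiction. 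That argument is tied to the ambient monotonicity of lengths and splits into cases $r<1$ and $r>1$, and the contradiction at the end requires a little unpacking. Your argument instead treats $\sum_i c_i r^{s_i}=x_n$ directly: multiplying by $\mathsf{d}(r)^{s_K}$ (with $K$ taken at least $n$, say by padding with zero coefficients) gives $A_L+A_H=\mathsf{n}(r)^{s_{n+1}}\mathsf{d}(r)^{s_K-s_n}$, where $\mathsf{d}(r)^{s_K-s_n}$ divides $A_L$ termwise, $\mathsf{n}(r)^{s_{n+1}}$ divides $A_H$ termwise and hence also $A_L$ through the total, and coprimality plus the nonnegativity sandwich force $A_L\in\{0,\mathsf{n}(r)^{s_{n+1}}\mathsf{d}(r)^{s_K-s_n}\}$, i.e.\ one side of the cut is empty. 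The same coprimality device then shows $c_n>0$ when $A_H=0$ and $c_{n+1}>0$ when $A_L=0$ (replacing the paper's appeal to Lemmas~\ref{lemma: algebraic manipulations for r < 1} and \ref{lemma: algebraic manipulations for r > 1} and the observation $\mathsf{d}(r)^{\delta_{k-1}}\nmid\mathsf{n}(r)^{\delta_k}$). The gain is that your argument is uniform in $r$, avoids any appeal to the existence of extremal factorizations, and converts what is a somewhat delicate chain-chasing step in the paper into a transparent divisibility count. The remainder of your proof --- using atom-sharing to place each half of $\mathsf{Z}(x_n)$ in a single $\mathcal{R}$-class, using the lemma to block any connecting chain from crossing the cut, and invoking Remark~\ref{remark: any factorization can be transformed into the factorization of minimum length} for the reverse inclusion $\betti(S_{r,N})\subseteq\{\mathsf{n}(r)^{\delta_n}r^{s_n}\}$ --- matches the paper's.
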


\begin{proof}
	Fix $n \in \nn$, and take $x = \mathsf{n}(r)^{\delta_n}r^{s_n} \in M_{r,S}$. Let $A = \{r^{s_0}\!, \ldots, r^{s_n}\}$ and $B = \mathcal{A}(M_{r,S})\setminus A$. Consider the factorizations $z = \mathsf{n}(r)^{\delta_n}r^{s_n}$ and $z' = \mathsf{d}(r)^{\delta_n}r^{s_{n + 1}}$ in $\mathsf{Z}(x)$. By Remark~\ref{remark: any factorization can be transformed into the factorization of minimum length}, if $r < 1$ (resp. $r > 1$) then the factorization of minimum (resp. maximum) length of $x$ does not contain atoms from the subset $B$. By way of contradiction, suppose that there exists a factorization $z'' \in \mathsf{Z}(x)$ containing atoms from both subsets $A$ and $B$. If $r < 1$ (resp. $r > 1$) then by repeatedly applying the identity $\mathsf{d}(r)^{\delta_m}r^{s_{m + 1}} = \mathsf{n}(r)^{\delta_m} r^{s_m}$ we can generate factorizations $z'' = z_1, \ldots, z_k \in \mathsf{Z}(x)$ such that $z_k$ is the factorization of minimum (resp. maximum) length of $x$ and, for each $i \in \llbracket 1, k - 1 \rrbracket$, we have $|z_{i}| - |z_{i + 1}| = |\mathsf{n}(r)^{\delta_m} - \mathsf{d}(r)^{\delta_m}|$ (resp. $|z_{i + 1}| - |z_{i}| = |\mathsf{n}(r)^{\delta_m} - \mathsf{d}(r)^{\delta_m}|$) for some $m \in \nn$. But since $z''$ contains atoms from $B$ and $z_k$ does not, at some point in generating $z_1, \ldots, z_k$ we applied the transformation $\mathsf{d}(r)^{\delta_n}r^{s_{n + 1}} = \mathsf{n}(r)^{\delta_n} r^{s_n}$\!, which is a contradiction given that $z''$ also contains atoms from $A$. Hence $z$ and $z'$ belong to different $\mathcal{R}$-classes of $\mathsf{Z}(x)$, from which the inclusion $\{\mathsf{n}(r)^{\delta_n}r^{s_n} \mid n \in \nn\} \subseteq\betti\,(M_{r,S})$ follows. 
	
	Note that if the atoms ocurring in a factorization $z^* \in \mathsf{Z}(x)$ are in $A$ then, by a similar argument to that one used in Remark~\ref{remark: any factorization can be transformed into the factorization of minimum length}, there exist factorizations $z = z^*_1, \ldots, z^*_t = z^* \in \mathsf{Z}(x)$ such that, for each $i \in \llbracket 2,t \rrbracket$, one can obtain $z^*_i$ from $z^*_{i - 1}$ by performing a transformation of the form $\mathsf{d}(r)^{\delta_m}r^{s_{m + 1}} = \mathsf{n}(r)^{\delta_m} r^{s_m}$\! for some $m \in \nn$. Since $\mathsf{d}(r)^{\delta_{k - 1}} \nmid \mathsf{n}(r)^{\delta_k}$ for any $k \in \nn^{\bullet}$\!, the atom $r^{s_n}$ shows up in the factorization $z^*_i$ for each $i \in \llbracket 1,t \rrbracket$. Consequently, all factorizations of $x$ with atoms in $A$ are in the same $\mathcal{R}$-class, namely $R_x(r^{s_n})$. Similarly, all factorizations of $x$ with atoms in $B$ are in $R_x(r^{s_{n + 1}})$, from which $(2)$ follows.
	
	To verify that the inclusion $\betti\,(M_{r,S}) \subseteq \{\mathsf{n}(r)^{\delta_n}r^{s_n} \mid n \in \nn\}$ holds, let $x \in M_{r,S} \setminus \{\mathsf{n}(r)^{\delta_n} r^{s_n} \mid n \in \nn\}$, and let $z_{\min} \in \mathsf{Z}(x)$ be the factorization of minimum length of $x$. For every $z \in \mathsf{Z}(x)$, there exist factorizations $z = z_1, \ldots, z_k = z_{\min} \in \mathsf{Z}(x)$ such that, for each $i \in \llbracket 1, k - 1 \rrbracket$, we have $|z_{i}| - |z_{i + 1}| = |\mathsf{n}(r)^{\delta_m} - \mathsf{d}(r)^{\delta_m}|$ for some $m \in \nn$ by Remark~\ref{remark: any factorization can be transformed into the factorization of minimum length}; since we generate such a sequence of factorizations through transformations of the form $\mathsf{n}(r)^{\delta_k} r^{s_k} = \mathsf{d}(r)^{\delta_k}r^{s_{k + 1}}$\!, we have $\gcd(z_i,z_{i + 1}) \neq 0$ unless $z_i = \mathsf{n}(r)^{\delta_{j}}r^{s_j}$ or $z_{i + 1} = \mathsf{n}(r)^{\delta_j}r^{s_j}$ for some $j \in \nn$, but this is impossible since $x \not\in \{\mathsf{n}(r)^{\delta_n} r^{s_n} \mid n \in \nn\}$. Consequently, there is a chain connecting $z$ and $z_{\min}$ which, in turn, implies $|\mathcal{R}_x| = 1$, and our proof concludes.
\end{proof}

\begin{cor}
	Let $M_{r,S}$ be a nontrivial atomic exponential Puiseux semiring. Then
	\[
	\mathsf{c}(M_{r,S}) = 
	\max\left(\mathsf{n}(r), \mathsf{d}(r)\right)^{\delta_0}\!. 
	\]
\end{cor}

\begin{cor} \label{cor: catenary degree of rational cyclic semirings}
	Let $M_{r,\nn}$ be a nontrivial atomic rational cyclic semiring. Then
	\[
	\mathsf{c}(M_{r,\nn}) = 
	\max(\mathsf{n}(r), \mathsf{d}(r)).
	\]
\end{cor}

For the rest of the section, we focus on the factorization invariant known as \emph{set of distances} or \emph{delta set}. This invariant has been extensively studied in the context of BFMs, including numerical monoids \cite{CBSTCNKDR2006} and transfer Krull monoids \cite{AGQZ2019}. 

\begin{definition} \label{def: set of distances}
	Let $M$ be an atomic monoid, and let $x$ be an element of $M$. 
	\begin{enumerate}
		\item It is said that $d \in \nn^{\bullet}$ is a \emph{distance} of $x$ provided that $\mathsf{L}(x) \cap \llbracket l, l + d \rrbracket = \{l, l + d\}$ for some $l \in \mathsf{L}(x)$. 
		\item The \emph{set of distances of $x$}, denoted by $\Delta(x)$, is the set consisting of all the distances of $x$.
		\item The set	
		$\Delta(M) \coloneqq \bigcup_{x \in M} \Delta(x)$ is called the \emph{set of distances} of $M$\!.
	\end{enumerate}
\end{definition}

We now show that atomic exponential Puiseux semirings have finite sets of distances. Additionally, we compute the minimum and maximum of $\Delta(M_{r,S})$ and proved that they are both attained at Betti elements for all atomic exponential Puiseux semirings $M_{r,S}$.

\begin{prop} \label{prop: delta sets of exponential Puiseux semirings}
	Let $M_{r,S}$ be a nontrivial atomic exponential Puiseux semiring. Then 
	\begin{equation} \label{eq: inclusions of delta sets}
	\big\{|\mathsf{n}(r)^{\delta_n} - \mathsf{d}(r)^{\delta_n}| : n \in \nn\big\} \subseteq \Delta(M_{r,S}) \subseteq \big\llbracket |\mathsf{n}(r) - \mathsf{d}(r)|, |\mathsf{n}(r)^{\delta_0} - \mathsf{d}(r)^{\delta_0}| \big\rrbracket.
	\end{equation}
\end{prop}

\begin{proof}
	The first inclusion and the fact that $\Delta(M_{r,S})$ is finite follow almost immediately from Remark~\ref{remark: any factorization can be transformed into the factorization of minimum length}. On the other hand, it is easy to mimic the proof of \cite[Theorem~2.5]{CGSLLMS2012} to show that $\max\Delta(M_{r,S})$ is achieved at a Betti element. We leave all these details to the reader. Our result follows then from Remark~\ref{remark: any factorization can be transformed into the factorization of minimum length} and Proposition~\ref{prop: betti elements}.
\end{proof}

\begin{cor} \label{cor: delta sets of rational cyclic semirings}
	Let $M_{r,\nn}$ be a nontrivial atomic rational cyclic semiring. Then $\Delta(M_{r,\nn}) = \{|\mathsf{n}(r) - \mathsf{d}(r)|\}$.
\end{cor}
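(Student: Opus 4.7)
The plan is to obtain this corollary as an immediate specialization of the preceding lemma (\emph{delta sets of exponential Puiseux semirings}) to the case where the numerical monoid exponent set is all of $\nn$. A rational cyclic semiring is precisely $S_r = S_{r, \nn}$, so the $(n+1)$th smallest element of $N = \nn$ is $s_n = n$. Consequently every gap $\delta_n = s_{n+1} - s_n$ equals $1$, and in particular $\delta_0 = 1$.

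With $\delta_n = 1$ for all $n \in \nn$, the chain of inclusions established in the lemma,
\[
\big\{|\mathsf{n}(r)^{\delta_n} - \mathsf{d}(r)^{\delta_n}| : n \in \nn\big\} \subseteq \Delta(S_{r,\nn}) \subseteq \big\llbracket |\mathsf{n}(r) - \mathsf{d}(r)|, |\mathsf{n}(r)^{\delta_0} - \mathsf{d}(r)^{\delta_0}| \big\rrbracket,
\]
collapses on both sides to the singleton $\{|\mathsf{n}(r) - \mathsf{d}(r)|\}$. The leftmost set becomes $\{|\mathsf{n}(r) - \mathsf{d}(r)|\}$ since the expression no longer depends on $n$, and the rightmost interval has equal endpoints, hence is the same singleton. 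Squeezing $\Delta(S_r)$ between these two equal sets gives the desired equality.

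There is effectively no obstacle to overcome: once the substitution $\delta_n = 1$ is noted, the result is forced. The only thing worth a brief remark in the writeup is that the hypothesis that $S_r$ be nontrivial ensures $r \not\in \nn$, so the lemma applies and the singleton is nonzero; atomicity guarantees the lemma's standing assumptions on $S_r$.
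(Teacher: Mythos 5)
Your proof is correct and matches the intended derivation: the corollary is stated in the paper without a separate proof precisely because, as you observe, setting $N = \nn$ forces $\delta_n = 1$ for all $n$, collapsing both sides of the inclusion chain in Lemma~\ref{prop: delta sets of exponential Puiseux semirings} to the singleton $\{|\mathsf{n}(r) - \mathsf{d}(r)|\}$.
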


The inclusions in \eqref{eq: inclusions of delta sets} might be proper. Consider the following example.

\begin{example}
	Let $r = 2/3$, and consider the following numerical monoids 
	\[
		S = \{0, 18, 19, 25, 27\} \cup \nn_{\geq 36} \hspace{.6cm} \text{ and } \hspace{.6cm} S' = \nn_{\geq 2} \cup\{0\}.
	\]
	Clearly, the monoids $M_{r,S}$ and $M_{r,S'}$ are both atomic. Moreover, it is not hard to check 
	\[
	\mathsf{L}_{M_{r,S}}\left(2(2/3)^{18} + 4(2/3)^{25}\right) = \left\{6, 7, 11, 12\right\}\!,
	\]
	which implies that $4 \in \Delta(M_{r,S})$, but $4 \not\in \left\{|\mathsf{n}(r)^{\delta_n} - \mathsf{d}(r)^{\delta_n}| : n \in \nn\right\}$, where the $\delta_n\!$'s are taken with respect to the numerical monoid $S$. On the other hand, we have $\Delta(M_{r,S'}) = \{1,5\}$ as the reader can verify, so $ \Delta(M_{r,S'}) \subsetneq \big\llbracket |\mathsf{n}(r) - \mathsf{d}(r)|, |\mathsf{n}(r)^{\delta_0} - \mathsf{d}(r)^{\delta_0}| \big\rrbracket$, where $\delta_0 = 2$.
\end{example}

\begin{prop}
	Let $M_{r,S}$ be a nontrivial atomic exponential Puiseux semiring. Then
	\[
	\min\Delta(M_{r,S}) = \left|\mathsf{n}(r) - \mathsf{d}(r)\right| \hspace{0.6 cm} \text{ and } \hspace{0.6 cm} \max\Delta(M_{r,S}) = \left|\mathsf{n}(r)^{\delta_0} - \mathsf{d}(r)^{\delta_0}\right|
	\]
	are both attained at Betti elements.
\end{prop}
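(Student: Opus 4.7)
The plan is to read both formulas directly off of Lemma~\ref{prop: delta sets of exponential Puiseux semirings} after recording two elementary facts about the numerical monoid $N$ and about the growth of the map $f \colon \nn^\bullet \to \nn$ given by $f(k) = |\mathsf{n}(r)^k - \mathsf{d}(r)^k|$. Since $S_{r,N}$ is nontrivial and atomic, Proposition~\ref{prop:atomicity of exponential PMs} forces $\mathsf{n}(r), \mathsf{d}(r) \geq 2$; a quick computation (if $b > a \geq 2$ then $b^k(b-1) > a^k(a-1)$) shows that $f$ is strictly increasing on $\nn^\bullet$. This is the only quantitative input beyond what the preceding lemma already gives us.

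First I would handle the minimum. Since $N$ is a numerical monoid, $\nn \setminus N$ is finite, so there exists $n \in \nn$ with $\delta_n = 1$. The proof of the first inclusion in Lemma~\ref{prop: delta sets of exponential Puiseux semirings} in fact shows more than its statement: for each $n \in \nn$, the value $|\mathsf{n}(r)^{\delta_n} - \mathsf{d}(r)^{\delta_n}|$ belongs to $\Delta(x)$ for the specific element $x = \mathsf{n}(r)^{\delta_n} r^{s_n}$. Applied to our chosen $n$, this gives $|\mathsf{n}(r) - \mathsf{d}(r)| \in \Delta(\mathsf{n}(r) r^{s_n})$, and by Proposition~\ref{prop: betti elements} the element $\mathsf{n}(r) r^{s_n}$ is a Betti element of $S_{r,N}$. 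The second inclusion in Lemma~\ref{prop: delta sets of exponential Puiseux semirings} supplies the matching lower bound $\min \Delta(S_{r,N}) \geq |\mathsf{n}(r) - \mathsf{d}(r)|$, so the minimum equals this value and is attained at a Betti element.

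For the maximum, the key observation is that $\delta_n \leq \delta_0$ for every $n \in \nn$. Indeed, $\delta_0$ is the smallest positive element of $N$, so $s_n + \delta_0 \in N$, which forces $s_{n+1} \leq s_n + \delta_0$ and hence $\delta_n \leq \delta_0$. Combined with the monotonicity of $f$, this yields
\[
\max\bigl\{|\mathsf{n}(r)^{\delta_n} - \mathsf{d}(r)^{\delta_n}| : n \in \nn\bigr\} = |\mathsf{n}(r)^{\delta_0} - \mathsf{d}(r)^{\delta_0}|.
\]
By the (stronger form of the) first inclusion in Lemma~\ref{prop: delta sets of exponential Puiseux semirings}, this value lies in $\Delta(x)$ for the Betti element $x = \mathsf{n}(r)^{\delta_0} r^{s_0} = \mathsf{n}(r)^{\delta_0}$, which is again a Betti element by Proposition~\ref{prop: betti elements}. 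The second inclusion of the same lemma provides the matching upper bound $\max \Delta(S_{r,N}) \leq |\mathsf{n}(r)^{\delta_0} - \mathsf{d}(r)^{\delta_0}|$, finishing the argument.

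I do not expect a genuine obstacle here: the statement is essentially a corollary of Lemma~\ref{prop: delta sets of exponential Puiseux semirings}. The only point requiring a bit of care is to observe that the distance $|\mathsf{n}(r)^{\delta_n} - \mathsf{d}(r)^{\delta_n}|$ is realized on the specific Betti element $\mathsf{n}(r)^{\delta_n} r^{s_n}$ (which is already explicit in the proof of that lemma but not quite in its statement), so that the extremal distances can indeed be attributed to Betti elements rather than just to $S_{r,N}$ as a whole.
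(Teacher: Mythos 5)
Your proof is correct and follows essentially the same route as the paper: read off the two equalities from the inclusions in Lemma~\ref{prop: delta sets of exponential Puiseux semirings}, then note that the proof of that lemma actually realizes $|\mathsf{n}(r)^{\delta_n} - \mathsf{d}(r)^{\delta_n}|$ as a distance of the Betti element $\mathsf{n}(r)^{\delta_n}r^{s_n}$, applied at an $n$ with $\delta_n = 1$ (the paper picks $s_n = F(N)+1$) and at $n = 0$. The detour through monotonicity of $k \mapsto |\mathsf{n}(r)^k - \mathsf{d}(r)^k|$ and the bound $\delta_n \leq \delta_0$ is correct but superfluous, since membership of $|\mathsf{n}(r)^{\delta_0} - \mathsf{d}(r)^{\delta_0}|$ in $\Delta(\mathsf{n}(r)^{\delta_0}r^{s_0})$ and the upper bound from the second inclusion already close the argument.
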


\begin{proof}
	By Proposition~\ref{prop: delta sets of exponential Puiseux semirings}, we have that the equalities $\min\Delta(M_{r,S}) = |\mathsf{n}(r) - \mathsf{d}(r)|$ and $\max\Delta(M_{r,S}) = |\mathsf{n}(r)^{\delta_0} - \mathsf{d}(r)^{\delta_0}|$ hold. Furthermore, we have that $|\mathsf{n}(r) - \mathsf{d}(r)|$ (resp. $|\mathsf{n}(r)^{\delta_0} - \mathsf{d}(r)^{\delta_0}|$) is a distance of $\mathsf{n}(r)r^{F(S) + 1}$ (resp. $\mathsf{n}(r)^{\delta_0}r^{s_0}$). Our argument concludes after noticing that both elements $\mathsf{n}(r)r^{F(S) + 1}$ and $\mathsf{n}(r)^{\delta_0}r^{s_0}$ are Betti elements of $M_{r,S}$ by Proposition~\ref{prop: betti elements}.
\end{proof}

In \cite{CGSLLMS2012}, it was proved that, for every bounded factorization monoid $M$, the minimum and maximum of $\Delta(M)$ are both attained at Betti elements. But notice that atomic exponential Puiseux semirings are not, in general, BFMs (see \cite[Example~4.7]{SAJBHP2020}).

\section{Sets of Lengths and Their Unions}

For a positive integer $d$ and a nonnegative integer $B$, a subset $L \subseteq \mathbb{Z}$ is called an \emph{almost arithmetic progression} (or \emph{AAP}) with \emph{difference} $d$ and \emph{bound} $B$ if
	\[
	L = y + (L' \cup L^{*} \cup L'') \subseteq y + d\mathbb{Z},
	\] 
where $y \in\mathbb{Z}$ and $L^{*}$ is a nonempty arithmetic progression with difference $d$ such that $\min L^* = 0$, $L' \subseteq [-B,-1]$, and $L'' \subseteq \sup L^* + [1,B]$. If the set $L^*$ has infinite cardinality then we assume that $L'' = \emptyset$. As the name indicates, almost arithmetic progressions are generalizations of arithmetic sequences, and they have been used in factorization theory to describe sets of lengths of various classes of monoids (see, for instance, \cite[Theorem~4.3.6]{GH06b} and \cite[Theorem~4.2]{GaGe09}).

In this section, we show that sets of lengths of atomic exponential Puiseux semirings are well structured. Specifically, we prove that if $M_{r,S}$ is a nontrivial atomic exponential Puiseux semiring then there exists $B \in \nn$ such that every $L \in \mathcal{L}(M_{r,S})$ is an AAP with difference $|\mathsf{n}(r) - \mathsf{d}(r)|$ and bound $B$. 

\begin{theorem} \label{theorem: sets of lengths}
	Let $M_{r,S}$ be a nontrivial atomic exponential Puiseux semiring. There exists $B \in \nn$ such that every $L \in \mathcal{L}(M_{r,S})$ is an AAP with difference $|\mathsf{n}(r) - \mathsf{d}(r)|$ and bound~$B$\!.
\end{theorem}

\begin{proof}
	Clearly, there exists $m \in \nn$ such that $s_m = F(S) + 1$, where $F(S)$ represents the Frobenius number of $S$. Let $x$ be an arbitrary element of $M_{r,S}$, and let $z_{\min} = \sum_{i = 0}^{n} c_i r^{s_i} \in \mathsf{Z}(x)$ with coefficients $c_0, \ldots, c_n \in \nn$ be the factorization of minimum length of $x$. Obviously, we may assume that $m \leq n$. Before continuing with the proof, we introduce a definition. 
	Given a factorization $z = \sum_{i = 0}^{k} d_i r^{s_i}$ with coefficients $d_0, \ldots, d_k \in \nn$, we say that the sub-factorizations $\sum_{i = 0}^{m} d_i r^{s_i}$ and $\sum_{i = m + 1}^{k} d_i r^{s_i}$ are the \emph{prefix} and \emph{suffix} of $z$, respectively. We distinguish two cases. 
	
	\vskip 0.1cm
	\textsc{Case 1:} $r < 1$. Let $B_1 = \mathsf{d}(r)^{s_m} - \mathsf{n}(r)^{s_m}$\!, and let us denote by $B_2$ the maximum number of times we can consecutively apply the identity $\mathsf{n}(r)^{\delta_\alpha}r^{s_\alpha} = \mathsf{d}(r)^{\delta_\alpha}r^{s_{\alpha + 1}}$ with $\alpha \in \llbracket 0, m - 1 \rrbracket$ to increase the length of the prefix of a factorization $z = \sum_{i = 0}^{k} d_i r^{s_i}$ satisfying that $d_i < \mathsf{n}(r)^{s_m - s_i}$ for each $i \in \llbracket 0,m - 1 \rrbracket$. Evidently, neither $B_1$ nor $B_2$ depends on any element of $M_{r,S}$. Set 
	\[
		B \coloneqq \max\left(B_1, B_2\cdot\left(\mathsf{d}(r)^{\delta_0} - \mathsf{n}(r)^{\delta_0}\right)\right).
	\]
	We argue that $\mathsf{L}(x)$ is an AAP with difference $\mathsf{d}(r) - \mathsf{n}(r)$ and bound $B$. Observe that if $S = \nn$ then $B = 0$. Now if $c_j \geq \mathsf{n}(r)^{s_m - s_j}$ for some $j \in \llbracket 0,m - 1 \rrbracket$  then, by carrying out the transformation $\mathsf{n}(r)^{s_m - s_j}r^{s_j} = \mathsf{d}(r)^{s_m - s_j}r^{s_m}$\!, we can generate a new factorization $z' \in \mathsf{Z}(x)$ such that $|z'| - |z| \leq B$. Also note that, in this case, the term $\mathsf{d}(r)^{s_m - s_j}r^{s_m}$ shows up as a summand in the factorization $z'$; consequently, we can generate new factorizations of $x$ by applying, any number of times, the identity $\mathsf{n}(r)r^{\alpha} = \mathsf{d}(r)r^{\alpha + 1}$\! with $\alpha \geq s_m$. Hence
	\[
		\left\{|z'| + l\!\cdot\!(\mathsf{d}(r) - \mathsf{n}(r)) \mid l \in \nn\right\} \subseteq \mathsf{L}(x),
	\]
	and the set $\mathsf{L}(x)$ is an AAP with difference $\mathsf{d}(r) - \mathsf{n}(r)$ and bound $B$. On the other hand, if $c_j < \mathsf{n}(r)^{s_m - s_j}$ for each $j \in \llbracket 0,m - 1 \rrbracket$ then consider any factorization $z' \in \mathsf{Z}(x)$ satisfying that $|z'| - |z_{\min}| > B$. By Remark~\ref{remark: any factorization can be transformed into the factorization of minimum length}, there exist factorizations $z_{\min} = z_1, \ldots, z_k = z' \in \mathsf{Z}(x)$ such that, for each $i \in \llbracket 1, k - 1 \rrbracket$, we have 
	\[
		|z_{i + 1}| - |z_{i}| = \mathsf{d}(r)^{\delta_n} - \mathsf{n}(r)^{\delta_n}
	\]
	for some $n \in \nn$. Since $|z'| - |z_{\min}| > B_2\cdot\left(\mathsf{d}(r)^{\delta_0} - \mathsf{n}(r)^{\delta_0}\right)$, there exists $j \in \llbracket 1, k - 1 \rrbracket$ such that $z_{j + 1}$ was obtained from $z_j$ by carrying out a transformation of the form $\mathsf{n}(r)^{\delta_t}r^{s_t} = \mathsf{d}(r)^{\delta_t}r^{s_{t + 1}}$ for some $t \geq m$. As before, a term $\mathsf{n}(r)^{\delta_t}r^{s_{t}}$ shows up in the factorization $z_{j}$, which means that we can generate new factorizations of $x$ by applying the identity $\mathsf{n}(r)r^{\alpha} = \mathsf{d}(r)r^{\alpha + 1}$ with $\alpha \geq s_m$ an arbitrary number of times. Since $\mathsf{d}(r) - \mathsf{n}(r) \,|\, \mathsf{d}(r)^{\delta_n} - \mathsf{n}(r)^{\delta_n}$ for every $n \in \nn$, we have that
	\[
		|z'| \in \left\{|z_{j}| + l\cdot \left(\mathsf{d}(r) - \mathsf{n}(r)\right) \mid l \in \nn\right\} \subseteq \mathsf{L}(x).
	\]
	Observe that $|z_j| < |z'|$. If $|z_j| - |z_{\min}| > B$ then, by the same token, there exists $z_t \in \mathsf{Z}(x)$ such that 
	\[
		|z_j| \in \{|z_{t}| + l\cdot \left(\mathsf{d}(r) - \mathsf{n}(r)\right) \mid l \in \nn\} \subseteq \mathsf{L}(x)
	\]
	and $|z_t| < |z_j|$. Our argument follows inductively. 

	\vskip 0.1cm
	\textsc{Case 2:} $r > 1$. Let us denote by $B_2$ the maximum number of times we can consecutively apply the identity $\mathsf{d}(r)^{\delta_{\alpha - 1}}r^{s_{\alpha}} = \mathsf{n}(r)^{\delta_{\alpha - 1}}r^{s_{\alpha - 1}}$ with $\alpha \in \llbracket 1, m \rrbracket$ to increase the length of the prefix of a factorization $z = \sum_{i = 0}^{k} d_i r^{s_i}$ satisfying that $d_i < \mathsf{n}(r)^{\delta_i}$ for each $i \in \llbracket 1,m \rrbracket$. The number $B_2$ does not depend on any element $x \in M_{r,S}$. Notice that if $S = \nn$ then $B_2 = 0$. Consider the factorization $z_1 = z' + \sum_{i = m + 1}^{n} c_i r^{s_i} \in \mathsf{Z}(x)$, where $z'$ is the factorization of maximum length of $\pi(\sum_{i = 0}^{m} c_i r^{s_i})$. It is not hard to see that the inequality $|z_1| - |z_{\min}| \leq B_2\cdot(\mathsf{n}(r)^{\delta_0} - \mathsf{d}(r)^{\delta_0})$ holds. By \cite[Theorem~5.6]{fG16}, the monoid $M_{r,S}$ is an FFM, so there is no loss in assuming that no atom bigger than $r^{s_n}$ divides $x$ in $M_{r,S}$.

Next we describe a procedure to generate, iteratively, overlapping arithmetic sequences of lengths of $x$ with difference $\mathsf{n}(r) - \mathsf{d}(r)$. Set $z_{1}^* \coloneqq z_1$, and let us denote by $K_1$ the maximum number of times we can consecutively apply the identity $\mathsf{d}(r)^{\delta_{\alpha - 1}}r^{s_{\alpha}} = \mathsf{n}(r)^{\delta_{\alpha - 1}}r^{s_{\alpha - 1}}$ with $\alpha \in \llbracket m + 1,n \rrbracket$ to increase the length of $z_{1}^*$. Additionally, set 
\[
	\sigma_1 \coloneqq \left\{|z_{1}^*| + l \cdot (\mathsf{n}(r) - \mathsf{d}(r)) \mid l \in \llbracket 0, K_1 \rrbracket\right\},
\]
and note that $\sigma_1 \subseteq \mathsf{L}(x)$. Suppose that, for some $j \in \nn^{\bullet}$, we already defined $\sigma_j, K_j, \text{ and }z_{j}^* = \sum_{i = 0}^{n} c_{j,i}\, r^{s_i} \in \mathsf{Z}(x)$, where $c_{j,i} < \mathsf{d}(r)^{\delta_{i - 1}}$ for each $i \in \llbracket 1,m \rrbracket$, such that
	\begin{equation*} 
			\sigma_j = \left\{|z_{j}^*| + l \cdot (\mathsf{n}(r) - \mathsf{d}(r)) \mid l \in \llbracket 0, K_j \rrbracket\right\} \subseteq \mathsf{L}(x).
	\end{equation*}
	By hypothesis of induction, the nonnegative integer $K_j$ represents the maximum number of times we can consecutively apply the identity $\mathsf{d}(r)^{\delta_{\alpha - 1}}r^{s_{\alpha}} = \mathsf{n}(r)^{\delta_{\alpha - 1}}r^{s_{\alpha - 1}}$ with $\alpha \in \llbracket m + 1,n \rrbracket$ to increase the length of $z_{j}^*$. Now if $z_{j}^*$ is the factorization of maximum length of $x$ (i.e., $K_j = 0$) then our procedure stops at this factorization. Otherwise, there exists $u \in \llbracket m + 1, n \rrbracket$ such that $c_{j,u} \geq \mathsf{d}(r)^{\delta_{u - 1}} = \mathsf{d}(r)$. After consecutively applying the transformations
	\[
		\mathsf{d}(r)r^{s_u} = \mathsf{n}(r)r^{s_{u - 1}}\!, \ldots\,,  \mathsf{d}(r)r^{s_{m + 1}} = \mathsf{n}(r) r^{s_{m}}
	\]
	to increase the length of $z_{j}^*$, we obtain a factorization $z_{j, j + 1} = \sum_{i = 0}^{n} d_{j + 1,i} \,r^{s_i} \in \mathsf{Z}(x)$, where $d_{j + 1, i} < \mathsf{d}(r)^{\delta_{i - 1}}$ for each $i \in \llbracket 1, m - 1 \rrbracket$ and $d_{j + 1, m} < \mathsf{d}(r)^{\delta_{m - 1}} + \mathsf{n}(r)$. Clearly, the inequalities 
	\[
		|z_{j}^*| < |z_{j, j + 1}| \leq |z_{j}^*| + K_j\cdot(\mathsf{n}(r) - \mathsf{d}(r))
	\]
	hold. Consider the factorization $z_{j + 1}^* = z'' + \sum_{i = m + 1}^{n} d_{j + 1, i}\,r^{s_i} \in \mathsf{Z}(x)$, where $z''$ is the factorization of maximum length of $\pi(\sum_{i = 0}^{m} d_{j + 1, i}\,r^{s_i})$. Note that if $S = \nn$ then $z_{j + 1}^* = z_{j, j + 1}$. It is easy to see that $|z_{j, j + 1}| \leq |z_{j + 1}^*|$, and rewriting $z_{j + 1}^*$ as $z_{j + 1}^* = \sum_{i = 0}^{n} c_{j + 1, i}\, r^{s_i}$ we have $c_{j + 1,i} < \mathsf{d}(r)^{\delta_{i - 1}}$ for each $i \in \llbracket 1,m \rrbracket$. Let us denote by $K_{j + 1}$ the maximum number of times we can consecutively apply the identity $\mathsf{d}(r)^{\delta_{\alpha - 1}}r^{s_{\alpha}} = \mathsf{n}(r)^{\delta_{\alpha - 1}}r^{s_{\alpha - 1}}$ with $\alpha \in \llbracket m + 1,n \rrbracket$ to increase the length of $z_{j + 1}^*$. Then set
	\begin{equation*} 
	\sigma_{j + 1} \coloneqq \left\{|z_{j + 1}^*| + l \cdot (\mathsf{n}(r) - \mathsf{d}(r)) \mid l \in \llbracket 0, K_{j + 1} \rrbracket\right\} \subseteq \mathsf{L}(x).
	\end{equation*}
	If $K_{j + 1}\cdot(\mathsf{n}(r) - \mathsf{d}(r)) \geq |z_{j + 1}^*| - |z_{j, j + 1}|$ then we have that
	\[
		|z_{j + 1}^*| \in \sigma_{(j,j + 1)} \coloneqq\{|z_{j, j + 1}| + l\cdot(\mathsf{n}(r) - \mathsf{d}(r)) \mid l \in \llbracket 0, K_{j + 1} \rrbracket\} \subseteq \mathsf{L}(x),
	\]
	where the inclusion holds because the suffixes of $z_{j, j + 1}$ and $z_{j + 1}^*$ coincide. This means that we have three overlapping arithmetic sequences $\sigma_j$, $\sigma_{(j,j + 1)}$, and $\sigma_{j + 1}$ of lengths of $x$ with difference $\mathsf{n}(r) - \mathsf{d}(r)$. Combining these sequences we obtain a new arithmetic sequence $\sigma^*$ starting at $|z_j^*|$, containing $|z_{j + 1}^*|$, and with difference $\mathsf{n}(r) - \mathsf{d}(r)$. Now update $\sigma_j$ as $\sigma^*$\!. On the other hand, if 
	\[
		K_{j + 1}\cdot(\mathsf{n}(r) - \mathsf{d}(r)) < |z_{j + 1}^*| - |z_{j, j + 1}|
	\]
	then our procedure stops at the factorization $z_{j + 1}^*$. Notice that our algorithm eventually stops. In fact, in each step we generate from $z_j^*$ a new factorization $z_{j, j + 1}$ such that $|z_j^*| < |z_{j, j + 1}|$; consequently, the procedure stops after finitely many iterations by \cite[Proposition~4.5]{fG16}.
	
	The algorithm that we just described yields a sequence of nonnegative integers $K_1, \ldots, K_{k + 1}$, a sequence of factorizations $z_1^*, \ldots, z_{k + 1}^* \in \mathsf{Z}(x)$, and a sequence of finite arithmetic progressions $\sigma_1, \ldots, \sigma_{k + 1}$ with difference $\mathsf{n}(r) - \mathsf{d}(r)$ such that $\sigma_i$ starts at $|z_i^*|$ for each $i \in \llbracket 1, k + 1 \rrbracket$ and contains $|z_{i + 1}^*|$ for each $i \in \llbracket 1, k - 1 \rrbracket$. Moreover, we have $\sigma_i \subseteq \mathsf{L}(x)$ for each $i \in \llbracket 1, k + 1 \rrbracket$. We already showed that $|z_1^*| - |z_{\min}| \leq B_2\cdot (\mathsf{n}(r)^{\delta_0} - \mathsf{d}(r)^{\delta_0})$, where $B_2$ does not depend on $x$ and $z_{\min}$ is the factorization of minimum length of $x$. Now let us suppose that the algorithm stopped at the factorization $z_{k + 1}^*$. Note that if $S = \nn$ then $z_{k + 1}^*$ is the factorization of maximum length of $x$, which implies that $\mathsf{L}(x)$ is an arithmetic sequence with difference $\mathsf{n}(r) - \mathsf{d}(r)$. Since our procedure stopped at $z_{k + 1}^*$, it is not hard to see that 
	\[
		K_{k + 1} \leq (|z_{k + 1}^*| - |z_{k, k + 1}|)(\mathsf{n}(r) - \mathsf{d}(r))^{-1},
	\]
	where $z_{k, k + 1} = \sum_{i = 0}^{n} d_i r^{s_i} \in \mathsf{Z}(x)$, $z_{k + 1}^* = z'' + \sum_{i = m + 1}^{n} d_i r^{s_i} \in \mathsf{Z}(x)$, and $z''$ is the factorization of maximum length of $\pi(\sum_{i = 0}^{m} d_ir^{s_i})$ with $d_i < \mathsf{d}(r)^{\delta_{i - 1}}$ for each $i \in \llbracket 1, m - 1 \rrbracket$ and $d_m < \mathsf{d}(r)^{\delta_{m - 1}} + \mathsf{n}(r)$. Consequently, there exists $B_3 \in \nn$ (which does not depend on $x$) such that $|z_{k + 1}^*| - |z_{k, k + 1}| \leq B_3$. Let $z_{\beta} = \sum_{i = 0}^{n} e_i r^{s_i} \in \mathsf{Z}(x)$ be the factorization we obtain after consecutively applying the identity $\mathsf{d}(r)^{\delta_{\alpha - 1}}r^{s_{\alpha}} = \mathsf{n}(r)^{\delta_{\alpha - 1}}r^{s_{\alpha - 1}}$ with $\alpha \in \llbracket m + 1,n \rrbracket$ as many times as we can (i.e., $K_{k + 1}$) to increase the length of $z_{k + 1}^*$. Evidently, the inequalities 
	\[
		|z_{\beta}| - |z_{k + 1}^*| \leq K_{k + 1}\cdot(\mathsf{n}(r) - \mathsf{d}(r)) \leq B_3
	\]
	hold. Moreover, we have that $e_i < \mathsf{d}(r)^{\delta_{i - 1}}$ for every $i \in \llbracket 1,n \rrbracket$ with $i \neq m$ and 
	\[
		e_m < \mathsf{d}(r)^{\delta_{m - 1}} + \mathsf{n}(r)\cdot K_{k + 1} \leq \mathsf{d}(r)^{\delta_{m - 1}} + \mathsf{n}(r)\cdot B_3\cdot (\mathsf{n}(r) - \mathsf{d}(r))^{-1}.
	\]
	It is easy to see that there exists $B_4 \in \nn$ (which does not depend on $x$ either) such that $|z_{\max}| - |z_{\beta}| \leq B_4$, where $z_{\max} \in \mathsf{Z}(x)$ is the factorization of maximum length of $x$. Hence $|z_{\max}| - |z_{k, k + 1}| \leq 2B_3 + B_4$. Set 
	\[
		B \coloneqq \max(B_2\cdot(\mathsf{n}(r)^{\delta_0} - \mathsf{d}(r)^{\delta_0}), 2B_3 + B_4).
	\]
	Since $|z_{k, k + 1}| \in \sigma_k$, we can conclude that the set $\mathsf{L}(x)$ is an AAP with difference $\mathsf{n}(r) - \mathsf{d}(r)$ and bound $B$ for all $x \in M_{r,S}$.
\end{proof}

\begin{cor} \cite[Theorem~3.3]{CGG19} \label{cor: sets of lengths of rational cyclic semirings}
	Let $M_{r,\nn}$ be a nontrivial atomic rational cyclic semiring. Then $\mathsf{L}(x)$ is an arithmetic progression with difference $|\mathsf{n}(r) - \mathsf{d}(r)|$ for all $x \in M_{r,\nn}$.
\end{cor}

Since the sets of lengths of atomic rational cyclic semirings are arithmetic sequences, it is natural to wonder whether there exist other atomic exponential Puiseux semirings whose sets of lengths are also arithmetic sequences. We now answer this question negatively. 

\begin{cor} \label{cor: delta sets of rational cyclic semirings are singletons}
	Let $M_{r,S}$ be a nontrivial atomic exponential Puiseux semiring. Then $|\Delta(M_{r,S})| = 1$ if and only if $S = \nn$.
\end{cor}

\begin{proof}
	The reverse implication follows from Corollary~\ref{cor: delta sets of rational cyclic semirings}. As for the remaining implication, if $|\Delta(M_{r,S})| = 1$ then, by Proposition~\ref{prop: delta sets of exponential Puiseux semirings}, we have $\delta_i = \delta_j$ for all $i,j \in \nn$, which implies $S = \nn$.
\end{proof}

\begin{remark}
	A monoid $M$, which is not necessarily reduced, is \emph{strongly primary} if $M \neq M^{\times}$ and for every $x \in M \setminus M^\times$ there exists $n \in \nn$ such that $(M \setminus M^{\times})^n \subseteq xM$. Strongly primary monoids play a central role in factorization theory, and they have been widely investigated (see, for instance, \cite{AGFGST2019,AGMR2020}). It is known that sets of lengths of certain strongly primary monoids are AAPs with a common bound~$B$ (\cite[Theorem~4.3.6]{GH06b}). However, nontrivial exponential Puiseux semirings are not strongly primary. In fact, let $M_{r,S}$ be a nontrivial atomic exponential Puiseux semiring, and fix an atom $r^k \in \mathcal{A}(M_{r,S})$ with $k \in S$. Observe that, for any $n \in \nn^{\bullet}$\!, we have that $r^{s_k} \nmid_{M_{r,S}} \sum_{i = 1}^{n} r^{s_{k + i}}$ by Lemma~\ref{lemma: algebraic manipulations for r < 1} and Lemma~\ref{lemma: algebraic manipulations for r > 1}. Therefore, the monoid $M_{r,S}$ is not strongly primary.
\end{remark}

The bounds provided in Theorem~\ref{theorem: sets of lengths} are not tight, and the next example sheds some light upon this observation.

\begin{example} \label{ex: bounds B are not tight}
	Let $r = 3/4$ and $S = \{0, 5, 7\} \cup \nn_{\geq 8}$. By Theorem~\ref{theorem: sets of lengths}, every $L \in \mathcal{L}(M_{r,S})$ is an AAP with difference $1$ and bound $B = 717739$. Indeed, with notation as in \textsc{Case 1} of Theorem 4.1, we have $B_1 = 4^7 - 3^7$. On the other hand, $B_2$ is equal to the number of times we can apply the identity $\mathsf{n}(r)^{\delta_{\alpha}}r^{s_{\alpha}} = \mathsf{d}(r)^{\delta_{\alpha}} r^{s_{\alpha + 1}}$ with $\alpha \in \{0,1\}$ to increase the length of the factorization $z = (3^7 - 1) + (3^2 - 1)(3/4)^5$. Consequently,
	\[
	B_2 = \left\lfloor \frac{(3^7 - 1)}{3^5} \right\rfloor + \left\lfloor \frac{4^5\left\lfloor \frac{(3^7 - 1)}{3^5} \right\rfloor + 3^2 - 1}{9} \right\rfloor = 919.
	\]
	Thus, $B = 717739$. However, it is not hard to see that every $L \in \mathcal{L}(M_{r,S})$ is an AAP with difference $1$ and bound $B' = (4^5 - 3^5) + (4^2 - 3^2) = 788 < 717739$.
\end{example}

Motivated by Example~\ref{ex: bounds B are not tight}, we pose the following question.

\begin{question}
	Given an atomic exponential Puiseux semiring $M_{r,S}$, what is the smallest nonnegative integer $B$ for which $M_{r,S}$ is an AAP with difference $|\mathsf{n}(r) - \mathsf{d}(r)|$ and bound $B$?
\end{question}

We now present \emph{unions of sets of lengths}, a factorization invariant introduced by Chapman and Smith~\cite{ChWWS1990}. For a positive integer $k$, denote by $\,\mathcal{U}_k(M)$ the set of positive integers $m$ for which there exist $a_1, \ldots, a_k, a'_1, \ldots, a'_m \in \mathcal{A}(M)$ such that 
\[
	a _1 + \cdots + a_k = a'_1 + \cdots + a'_m.
\]
It is said that $\,\mathcal{U}_k(M)$ is the \emph{union of sets of lengths} of $M$ containing $k$. 

Next we show that, as it is the case for atomic rational cyclic semirings, unions of sets of lengths of nontrivial atomic exponential Puiseux semirings are arithmetic sequences with difference $|\mathsf{n}(r) - \mathsf{d}(r)|$. 

\begin{prop} \label{prop: unions of sets of lengths of exponential Puiseux semirings}
	Let $M_{r,S}$ be a nontrivial atomic exponential Puiseux semiring. For every $k \in\nn^{\bullet}$\!, the set $\mathcal{U}_k(M_{r,S})$ is an arithmetic sequence with difference $|\mathsf{n}(r) - \mathsf{d}(r)|$.
\end{prop}

\begin{proof}
	Assume that $r < 1$. Fix $k \in \nn^{\bullet}$\!, and let $x \in M_{r,S}$ such that $k \in \mathsf{L}(x)$. Let $m \in \mathsf{L}(x)$. In addition, let $z_{\min} \in \mathsf{Z}(x)$ be the factorization of minimum length of $x$, and let $z, z'' \in \mathsf{Z}(x)$ be factorizations of lengths $m$ and $k$, respectively. By Remark~\ref{remark: any factorization can be transformed into the factorization of minimum length}, there exist factorizations $z_{\min} = z_1, \ldots, z_n = z \in \mathsf{Z}(x)$ such that, for each $i \in \llbracket 1, n - 1 \rrbracket$, we have $|z_{i + 1}| - |z_i| = \mathsf{d}(r)^{\delta_\alpha} - \mathsf{n}(r)^{\delta_\alpha}$ for some $\alpha \in \nn$. Now consider the element $y = r^{F(S) + 1} \cdot x \in M_{r,S}$ and, for each $i \in \llbracket 1,n \rrbracket$, let $z'_i = r^{F(S) + 1}\cdot z_i \in \mathsf{Z}(y)$. Clearly, $|z'_i| = |z_i|$ for every $i \in \llbracket 1,n \rrbracket$. Take $j \in \llbracket 1,n - 1 \rrbracket$, and notice that if $z_j = \sum_{l = 0}^{u} c_l r^{s_l}$ with coefficients $c_0, \ldots, c_u \in \nn$ then $z_{j + 1}$ was obtained from $z_j$ by carrying out the transformation $\mathsf{n}(r)^{\delta_t}r^{s_t} = \mathsf{d}(r)^{\delta_t}r^{s_{t + 1}}$ for some $t \in \llbracket 0,u\rrbracket$. This implies that we can obtain the factorization $z'_{j + 1}$ from $z'_j = \sum_{l = 0}^{u} c_l r^{s_l + F(S) + 1}$ by applying the transformation
	\[
		\mathsf{n}(r)^{\delta_t}r^{s_t + F(S) + 1} = \mathsf{d}(r)^{\delta_t}r^{s_{t + 1} + F(S) + 1},
	\]
	which is valid in this context; in fact, carrying out this transformation once is equivalent to applying $(\mathsf{d}(r)^{\delta_t} - \mathsf{n}(r)^{\delta_t})(\mathsf{d}(r) - \mathsf{n}(r))^{-1}$ times the identity $\mathsf{n}(r)r^n = \mathsf{d}(r)r^{n + 1}$ with $n > F(S)$. In other words, there exist factorizations $z'_j = z^*_1, \ldots, z^*_s = z'_{j + 1} \in \mathsf{Z}(y)$ such that $|z^*_{i + 1}| - |z^*_i| = \mathsf{d}(r) - \mathsf{n}(r)$ for each $i \in \llbracket 1,s-1 \rrbracket$. Hence $\mathsf{L}(y)$ contains a finite arithmetic sequence starting at $|z_1'|$, ending at $|z_n'| = m$, and with difference $\mathsf{d}(r) - \mathsf{n}(r)$. Similarly, $\mathsf{L}(y)$ contains a finite arithmetic sequence starting at $|z_1'|$, ending at $|z''| = k$, and with difference $\mathsf{d}(r) - \mathsf{n}(r)$. Since both arithmetic sequences have an element in common, namely $|z'_1|$, we can think of $\mathcal{U}_k(M_{r,S})$ as the union of infinitely many finite arithmetic sequences with difference $\mathsf{d}(r) - \mathsf{n}(r)$ containing $k$, from which our result follows.
	
	We can use the same idea to tackle the case where $r \in \qq_{>1}\setminus \nn$. We leave the details to the reader.
\end{proof}

Geroldinger and Schmid~\cite{GeSch2019} investigated the intersection of systems of sets of lengths of numerical monoids. In particular, they proved $\cap \, \mathcal{L}(M) = \{\{0\}, \{1\}, \{2\}\}$, where the intersection is taken over all numerical monoids $M \neq \nn$. Gotti~\cite[Corollary~5.7]{fG2019} showed that if we take the previous intersection over all nontrivial atomic Puiseux monoids then we obtain $\cap \, \mathcal{L}(M) = \{\{0\}, \{1\}\}$. We can now describe the intersection of systems of sets of lengths of nontrivial atomic exponential Puiseux semirings $M_{r,S}$ with $|\mathsf{n}(r) - \mathsf{d}(r)|$ fixed. 

\begin{prop}
	For every $m \in \nn^{\bullet}$, we have
	\[
		\bigcap \mathcal{L}(M_{r,S}) = \big\{ \{n\} \mid n \in \nn \big\},
	\]
	where the intersection is taken over all nontrivial atomic exponential Puiseux semirings with $|\mathsf{n}(r) - \mathsf{d}(r)| = m$.
\end{prop}

\begin{proof}
	Fix $m \in \nn^{\bullet}$. Let $M_{r,S}$ be a nontrivial atomic exponential Puiseux semiring with $|\mathsf{n}(r) - \mathsf{d}(r)| = m$, and let $n$ be a nonnegative integer. If $n = 0$ then we have that $\{0\} \in \mathcal{L}(M_{r,S})$ since $\mathsf{L}(0) = \{0\}$, so we may assume that $n > 0$. Now consider the factorization $z_n = \sum_{i=1}^{n} r^{s_i}$, where $s_i \in S$ for each $i \in \llbracket 1,n \rrbracket$. By lemmas~\ref{lemma: algebraic manipulations for r < 1} and \ref{lemma: algebraic manipulations for r > 1}, we have $\mathsf{Z}(\pi(z_n)) = \{z_n\}$ which, in turn, implies that $\{n\} \in \mathcal{L}(M_{r,S})$. Hence the inclusion $\{\{n\} \mid n \in \nn\} \subseteq \cap \,\mathcal{L}(M_{r,S})$ holds. 
	
	Let $p$ be a prime number satisfying that $p > m$, and take $r = (p + m)/p$. For a numerical monoid $S$, note that $M_{r,S}$ is an FFM by \cite[Theorem~5.6]{fG16}. Consequently, all sets of lengths in $\cap\,\mathcal{L}(M_{r,S})$, where the intersection is taken over all nontrivial atomic exponential Puiseux semirings with $|\mathsf{n}(r) - \mathsf{d}(r)| = m$, have finite cardinality. Now consider the rational cyclic semiring $M_{q,\nn}$ with $q = r^{-1}$. By \cite[Proposition~3.7]{SAJBHP2020}, the monoid $M_{q,\nn}$ is atomic. Moreover, for each $x \in M_{q,\nn}$, we have $|\mathsf{L}(x)| \in \{1,\infty\}$. In fact, if $z = \sum_{i = 0}^{n} c_i q^{s_i} \in \mathsf{Z}(x)$ is not the factorization of maximum length of $x$ then $c_j \geq \mathsf{n}(q)$ for some $j \in \llbracket 0,n \rrbracket$ by Lemma~\ref{lemma: algebraic manipulations for r < 1}. By performing the transformation $\mathsf{n}(q)q^{s_j} = \mathsf{d}(q)q^{s_{j} + 1}$, we obtain a factorization $z_1 \in \mathsf{Z}(x)$ such that $|z| < |z_1|$. Since $z_1$ is not the factorization of maximum length of $x$ either, we can repeat this reasoning to obtain $z_2 \in \mathsf{Z}(x)$, which is not the factorization of maximum length of $x$, such that $|z_1| < |z_2|$, and so on. Consequently, we have $|\mathsf{L}(x)| = \infty$, which concludes our argument.
\end{proof}

\section{Omega Primalities and Tame Degrees}

Here we analyze the \emph{omega primality} in the context of nontrivial atomic exponential Puiseux semirings, but first let us introduce some definitions.

\begin{definition} \label{def: omega-primality and tame degree}
	Let $M$ be an atomic monoid, and let $x$ be a nonzero element of $M$.
	\begin{enumerate}
		\item Let $\omega(x)$ denote the smallest $n \in \overline{\nn}$ satisfying that if $x \mid_M a_1 + \cdots + a_m$ for some $a_1, \ldots, a_m \in\mathcal{A}(M)$ then $x \mid_M a_{i_1} + \cdots + a_{i_k}$, where $\{i_1, \ldots, i_k\} \subseteq \llbracket 1,m \rrbracket$ and $k \leq n$. In addition, set
		\[
		\omega(M) \coloneqq \sup \{\omega(a) \mid a \in\mathcal{A}(M)\}.
		\]
		The elements $\omega(x)$ and $\omega(M)$ are called the \emph{omega primalities} of $x$ and $M$\!, respectively.
		
		\item For $a \in \mathcal{A}(M)$, let $\mathsf{t}(a)$ denote the smallest $n \in \overline{\nn}$ satisfying that if $\mathsf{Z}(y) \cap (a + \mathsf{Z}(M)) \neq \emptyset$ for some $y \in M$ and $z \in \mathsf{Z}(y)$ then there exists $z' \in \mathsf{Z}(y) \cap (a + \mathsf{Z}(M))$ such that $\mathsf{d}(z,z') \leq n$. In addition, set
		\[
		\mathsf{t}(M) \coloneqq \sup \{\mathsf{t}(a) \mid a \in\mathcal{A}(M)\}.
		\]
		The elements $\mathsf{t}(a)$ and $\mathsf{t}(M)$ are called the \emph{tame degrees} of $a$ and $M$\!, respectively.
	\end{enumerate}
\end{definition} 

With notation as in Definition~\ref{def: omega-primality and tame degree}, the omega function $\omega\colon M \to \overline{\nn}$ was introduced by Geroldinger~\cite{aG1997}, and it measures how far is a nonzero element from being prime. Note that $x \in M^{\bullet}$ is prime if and only if $\omega(x) = 1$. Numerous papers have been dedicated to study the computational aspects of this factorization invariant (e.g., \cite{AnChKa2011,GGMFVT2014}).

Given a nontrivial atomic exponential Puiseux semiring $M_{r,S}$, we have $\rho(M_{r,S}) = \infty$ by \cite[Theorem~3.2]{FGCO}, which implies $\omega(M_{r,S}) = \infty$ and $\mathsf{t}(M_{r,S}) = \infty$ by \cite[Proposition~3.6 and Proposition 3.5]{GeKa2010}. Next we offer several characterizations of nontrivial atomic exponential Puiseux semirings with finite omega functions, but first we collect some technical results and introduce an additional definition.

\begin{lemma} \label{lemma: x dividing the set of smaller atoms}
	Let $M_{r,S}$ be a nontrivial atomic exponential Puiseux semiring with $r > 1$. If $x \in M_{r,S}$ and $k \in \nn$ such that $r^{s_k} \mid_{M_{r,S}} x$ then, for every $z = \sum_{i = 0}^{n} c_i r^{s_i} \in \mathsf{Z}(x)$ with coefficients $c_0, \ldots, c_n \in \nn$, we have that either
	\[
		r^{s_k} \,\bigg|_{M_{r,S}}\, \pi\left(\sum_{i = 0}^{k} c_i r^{s_i}\right) \hspace{0.6cm}\text{or}\hspace{0.6cm} r^{s_k} \,\bigg|_{M_{r,S}}\, \pi\left(\sum_{i = k + 1}^{n} c_i r^{s_i}\right)\!.  
	\]
\end{lemma}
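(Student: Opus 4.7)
The plan is to prove the claim by an invariant-preservation argument along a chain of elementary transformations of factorizations. Given any factorization $u = \sum_{i=0}^n c_i r^{s_i} \in \mathsf{Z}(x)$, I will write $A(u) \coloneqq \pi\bigl(\sum_{i=0}^k c_i r^{s_i}\bigr)$ for its \emph{low part} and $B(u) \coloneqq \pi\bigl(\sum_{i=k+1}^n c_i r^{s_i}\bigr)$ for its \emph{high part}, and show that the property
\[
P(u):\quad r^{s_k} \mid_{S_{r,N}} A(u) \ \text{ or }\ r^{s_k} \mid_{S_{r,N}} B(u)
\]
is preserved from one factorization of $x$ to another along any single step of the form $\mathsf{n}(r)^{\delta_m} r^{s_m} \leftrightarrow \mathsf{d}(r)^{\delta_m} r^{s_{m+1}}$.

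The first step is to exhibit a factorization of $x$ on which $P$ is immediate. Since $r^{s_k} \mid_{S_{r,N}} x$, the element $y \coloneqq x - r^{s_k}$ lies in $S_{r,N}$; prepending $r^{s_k}$ to any factorization of $y$ yields $z' \in \mathsf{Z}(x)$ whose coefficient of $r^{s_k}$ is at least one. Hence $A(z') \geq r^{s_k}$ and $A(z') - r^{s_k}$ visibly belongs to $S_{r,N}$ (one simply drops a copy of $r^{s_k}$ from the low part), so $r^{s_k} \mid_{S_{r,N}} A(z')$ and $P(z')$ holds.

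The second step is to connect $z'$ to the given $z$ by such a chain. By (the $r > 1$ analogue of) Remark~\ref{remark: any factorization can be transformed into the factorization of minimum length}, both $z'$ and $z$ can be transformed to the unique minimum-length factorization $z_{\min}$ of $x$ via applications of the identity $\mathsf{n}(r)^{\delta_m} r^{s_m} = \mathsf{d}(r)^{\delta_m} r^{s_{m+1}}$, and splicing the forward chain from $z'$ to $z_{\min}$ with the reverse of the one from $z$ to $z_{\min}$ produces a chain $z' = u_0, u_1, \ldots, u_N = z$ whose consecutive pairs differ by a single such identity, used forward or backward.

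The third and final step is to verify that $P$ survives each elementary move. Moves at an index $m \neq k$ keep both $r^{s_m}$ and $r^{s_{m+1}}$ on the same side of the boundary, so the values of $A$ and $B$ are untouched and there is nothing to check. The interesting case is $m = k$: here $\mathsf{n}(r)^{\delta_k}$ copies of $r^{s_k}$ are exchanged (in one direction or the other) for $\mathsf{d}(r)^{\delta_k}$ copies of $r^{s_{k+1}}$ across the boundary, so either $B(u_{j+1}) = B(u_j) + \mathsf{d}(r)^{\delta_k} r^{s_{k+1}}$ or $A(u_{j+1}) = A(u_j) + \mathsf{n}(r)^{\delta_k} r^{s_k}$. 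The side that gains mass therefore admits, via the same identity used to define the transformation, an alternative factorization containing at least $\mathsf{n}(r)^{\delta_k} \geq 2$ copies of $r^{s_k}$, and divisibility of that side by $r^{s_k}$ then follows exactly as in the first step. The main subtlety is this boundary move, since $A$ and $B$ genuinely change in value there; the key observation that unlocks the argument is that the transferred mass, $\mathsf{n}(r)^{\delta_k} r^{s_k} = \mathsf{d}(r)^{\delta_k} r^{s_{k+1}}$, is itself a positive integer multiple of $r^{s_k}$, so the receiving side picks up $r^{s_k}$-divisibility regardless of the divisibility status of the departing side.
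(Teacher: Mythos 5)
Your proof is correct, and it takes a genuinely different route from the paper's. The paper also starts from a factorization $z'$ with a positive coefficient at $r^{s_k}$, but then transforms $z'$ all the way to the unique maximum-length factorization $z_{\max}$ of $x$, argues via Lemma~\ref{lemma: algebraic manipulations for r > 1} that the prefix of $z_{\max}$ is divisible by $r^{s_k}$, and finally — under the hypothesis that the suffix of $z$ is \emph{not} divisible by $r^{s_k}$ — shows that the suffix's own max-length factorization lives entirely above index $k$, whence the prefix of $z$ must agree in $\pi$-value with the prefix of $z_{\max}$ by uniqueness of max-length factorizations. Your approach dispenses with all of this structural machinery: you isolate the prefix/suffix dichotomy as an invariant $P$ and track it directly along an arbitrary chain of elementary moves from $z'$ to $z$, using only the chain-connectedness guaranteed by Remark~\ref{remark: any factorization can be transformed into the factorization of minimum length}. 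The crux, correctly identified, is that a move at the boundary index $m = k$ automatically re-establishes $P$ on the receiving side (since the transferred quantity $\mathsf{n}(r)^{\delta_k} r^{s_k} = \mathsf{d}(r)^{\delta_k} r^{s_{k+1}}$ is itself a multiple of $r^{s_k}$ in $S_{r,N}$), while moves at $m \neq k$ leave the $\pi$-values of both parts untouched. This makes your argument more local and self-contained, at the mild cost of not identifying the specific factorization (namely $z_{\max}$) on which the prefix divisibility is most transparent — information the paper goes on to reuse in Lemma~\ref{lemma: elements of truncated rational cyclic semirings have finite omega function}.
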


\begin{proof}
	Let $z' = \sum_{i = 0}^{m} d_i r^{s_i} \in \mathsf{Z}(x)$ with coefficients $d_0, \ldots, d_m \in \nn$ such that $m \geq k$ and $d_k \neq 0$. Clearly, such a factorization $z'$ exists given that $r^{s_k} \mid_{M_{r,S}} x$. By Remark~\ref{remark: any factorization can be transformed into the factorization of minimum length}, there exist factorizations 
	\[
		z' = z_1, \ldots, z_l = z_{\max} = \sum_{i = 0}^{u} e_i r^{s_i} \in \mathsf{Z}(x)
	\]
	such that $z_{\max}$ is the factorization of maximum length of $x$ and, for each $i \in \llbracket 1, l - 1 \rrbracket$, we have $|z_{i + 1}| - |z_i| = |\mathsf{n}(r)^{\delta_t} - \mathsf{d}(r)^{\delta_t}|$ for some $t \in \nn$. Since $d_k \neq 0$, it is not hard to see that $r^{s_k} \mid_{M_{r,S}} \pi(\sum_{i = 0}^{k} e_ir^{s_i})$. Now assume that $r^{s_k} \nmid_{M_{r,S}} \pi(\sum_{i = k + 1}^{n} c_i r^{s_i})$, and let $z''$ be the factorization of maximum length of $\pi(\sum_{i = k + 1}^{n} c_i r^{s_i})$. Observe that if an atom $r^{s_t}$ shows up in the factorization $z''$ then $t > k$; otherwise, we can repeatedly apply the identity $\mathsf{d}(r)^{\delta_{\alpha}} r^{s_{\alpha + 1}} = \mathsf{n}(r)^{\delta_{\alpha}} r^{s_{\alpha}} $ to transform the factorization $\sum_{i = k + 1}^{n} c_i r^{s_i}$ into $z''$ as described in Remark~\ref{remark: any factorization can be transformed into the factorization of minimum length}, but since $z''$ would contain atoms smaller than $r^{s_{k + 1}}$\!, at some point we would have to apply the transformation $\mathsf{d}(r)^{\delta_{k}} r^{s_{k + 1}} = \mathsf{n}(r)^{\delta_k}r^{s_k}$\!, a contradiction. Consequently, we have 
	\[
		\pi\left(\sum_{i = 0}^{k} c_i r^{s_i}\right) = \pi\left(\sum_{i = 0}^{k} e_ir^{s_i}\right),
	\]
	from which our argument follows.
\end{proof}

\begin{lemma}\label{lemma: elements of truncated rational cyclic semirings have finite omega function}
Let $M_{r,S}$ be a nontrivial atomic exponential Puiseux semiring with $r > 1$. Then $\omega(r^s) < \infty$ for $s > F(S)$.
\end{lemma}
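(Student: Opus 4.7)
The plan is to bound $\omega(r^{s_m})$, where $s_m = F(N)+1$, by carefully analyzing arbitrary sums of atoms in $S_{r,N}$ divisible by $r^{s_m}$ and exhibiting divisible sub-sums of uniformly bounded size. The decisive feature of this particular atom is that $\delta_j = s_{j+1} - s_j = 1$ for every $j \geq m$, so the atoms at or above $r^{s_m}$ obey the elementary cascade $\mathsf{n}(r)\,r^{s_j} = \mathsf{d}(r)\,r^{s_{j+1}}$, which iterates to $\mathsf{d}(r)^k\, r^{s_{m+k}} = \mathsf{n}(r)^k\, r^{s_m}$.

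First I would take a generic sum $\sum_{j=1}^{t} a_j \in S_{r,N}$ divisible by $r^{s_m}$ and write it as $\sum_i c_i r^{s_i}$. If $c_m \geq 1$, the singleton sub-sum $\{r^{s_m}\}$ immediately works. Otherwise, Lemma~\ref{lemma: x dividing the set of smaller atoms} forces either the prefix $\pi(\sum_{i < m} c_i r^{s_i})$ or the suffix $\pi(\sum_{i > m} c_i r^{s_i})$ to be divisible by $r^{s_m}$ in $S_{r,N}$. In the prefix case, the sum lies in the finitely generated submonoid $\langle r^{s_0}, \ldots, r^{s_{m-1}} \rangle$, which (after scaling by $\mathsf{d}(r)^{s_m}$) embeds into a numerical monoid. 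Applying Dickson's lemma to the set of minimal tuples $(c_0, \ldots, c_{m-1}) \in \nn^{m}$ for which $\sum c_i r^{s_i}$ is divisible by $r^{s_m}$ in $S_{r,N}$ furnishes a uniform upper bound $B_1$ on the size of a minimal divisible prefix sub-sum.

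In the suffix case, the cascade identity implies that whenever $c_{m+k} \geq \mathsf{d}(r)^k$ for some $k \geq 1$, the sub-sum consisting of $\mathsf{d}(r)^k$ copies of $r^{s_{m+k}}$ already has value $\mathsf{n}(r)^k r^{s_m}$ and is thus divisible by $r^{s_m}$. When all $c_{m+k} < \mathsf{d}(r)^k$, the necessary inequality $\sum_{k \geq 1} c_{m+k} r^k \geq 1$ (coming from divisibility by $r^{s_m}$) forces only finitely many levels to contribute meaningfully. I would run a finite induction on the level $k$ that threads the cascade across levels: starting from the highest occupied level, repeatedly trade $\mathsf{d}(r)$ copies of $r^{s_{j}}$ for $\mathsf{n}(r)$ copies of $r^{s_{j-1}}$ until the accumulated multiplicity at level $m+1$ reaches the threshold $\mathsf{d}(r)$, at which point one final trade produces a copy of $r^{s_m}$. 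Since all multiplicities are bounded once the initial coefficients are, the total number of atoms consumed admits a uniform bound $B_2$. Combining everything, $\omega(r^{s_m}) \leq \max(1, B_1, B_2) < \infty$. The main obstacle is the bookkeeping in the suffix case: one must argue that the partial cascades across several levels can always be assembled into a single divisible sub-sum of bounded size, rather than leaving the "excess" scattered unusably across many large atoms; the geometric decay $c_{m+k} < \mathsf{d}(r)^k$ in the non-trivial regime is what makes this control possible.
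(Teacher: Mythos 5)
Your overall architecture is right: reduce to the prefix/suffix dichotomy via Lemma~\ref{lemma: x dividing the set of smaller atoms} and handle the two pieces separately. The prefix case via Dickson's lemma applied to the upward-closed set of tuples $(c_0,\dots,c_{m-1})$ with $r^{s_m} \mid_{S_{r,N}} \sum c_i r^{s_i}$ is a valid (if non-explicit) argument; the paper instead uses an elementary pigeonhole, noting that if $\sum_{i<m} c_i$ exceeds $\sum_{i<m}\mathsf{n}(r)^{s_m-s_i}$ then some $c_j \geq \mathsf{n}(r)^{s_m-s_j}$, and $\mathsf{n}(r)^{s_m-s_j} r^{s_j} = \mathsf{d}(r)^{s_m-s_j} r^{s_m}$ is trivially divisible.

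The suffix case, however, has a genuine gap. Your sub-case (a) takes $\mathsf{d}(r)^k$ copies of $r^{s_{m+k}}$ when $c_{m+k}\geq \mathsf{d}(r)^k$, but $\mathsf{d}(r)^k$ is unbounded in $k$, so this alone does not yield a finite $\omega$. Your sub-case (b), the ``cascade'' for when all $c_{m+k}<\mathsf{d}(r)^k$, is left at the level of intent, and it attacks a harder problem than necessary. The decisive fact you are missing is that $\mathsf{d}(r)$ copies --- not $\mathsf{d}(r)^k$ --- of $r^{s_j}$ already suffice for \emph{any} $j>m$: since $\delta_i = 1$ for all $i\geq m$, repeated use of $\mathsf{d}(r) r^{s_{i+1}} = \mathsf{n}(r) r^{s_i}$ gives
\[
\mathsf{d}(r)\,r^{s_j} \;=\; \mathsf{n}(r)\,r^{s_m} + (\mathsf{n}(r)-\mathsf{d}(r))\sum_{i=m+1}^{j-1} r^{s_i},
\]
so $r^{s_m} \mid_{S_{r,N}} \mathsf{d}(r)\,r^{s_j}$. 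Combined with Lemma~\ref{lemma: algebraic manipulations for r > 1}(5), which shows that if every $c_j < \mathsf{d}(r)$ for $j>m$ then the suffix has a unique factorization (and hence cannot be divisible by $r^{s_m}$ when $c_m=0$), one gets directly that some $c_j\geq\mathsf{d}(r)$ with $j>m$, giving a divisible sub-sum of length $\mathsf{d}(r)$. This is exactly the paper's argument, and it collapses your entire cascade machinery; the resulting uniform bound is $K = \max\bigl(\mathsf{d}(r), \sum_{i=0}^{m-1}\mathsf{n}(r)^{s_m-s_i}\bigr)$.
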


\begin{proof}
	Let $m \in \nn$ such that $s_m \geq F(S) + 1$, and set $K \coloneqq \max(\mathsf{d}(r), \sum_{i = 0}^{m - 1} \mathsf{n}(r)^{s_m - s_i})$. We shall prove that $\omega(r^{s_m}) \leq K$. Let $x \in M_{r,S}$ such that $r^{s_m} \,|_{M_{r,S}}\, x$, and consider a factorization $z = \sum_{i = 0}^{n} c_ir^{s_i} \in \mathsf{Z}(x)$ with coefficients $c_0, \ldots, c_n \in \nn$. There is no loss in assuming that $m < n$ and $c_m = 0$. By Lemma~\ref{lemma: x dividing the set of smaller atoms}, we have that either 
	\[
		r^{s_m} \,\bigg|_{M_{r,S}}\, \pi\left(\sum_{i = 0}^{m} c_ir^{s_i}\right) \hspace{.6cm} \text{ or } \hspace{.6 cm} r^{s_m} \,\bigg|_{M_{r,S}}\, \pi\left(\sum_{i = m + 1}^{n} c_ir^{s_i}\right).
	\]
	In the latter case, there exists $j \in \llbracket m + 1,n \rrbracket$ such that $c_j \geq \mathsf{d}(r)$ by Lemma~\ref{lemma: algebraic manipulations for r > 1} and, obviously, $r^{s_m} \,|_{M_{r,S}}\, \mathsf{d}(r)r^{s_j}$\!. In other words, $r^{s_m}$ divides a sub-factorization of $z$ with length at most $K$. Now suppose that $r^{s_m} \,|_{M_{r,S}}\, \pi(\sum_{i = 0}^{m} c_ir^{s_i})$. Without loss of generality, we can assume that $K < \sum_{i = 0}^{m - 1} c_i$, so $m \neq 0$. Consequently, there exists $j \in \llbracket 0,m - 1 \rrbracket$ such that $c_j \geq \mathsf{n}(r)^{s_m - s_j}$\!, but $r^{s_m} \,|_{M_{r,S}}\, \mathsf{n}(r)^{s_m - s_j}r^{s_j}$\!. Therefore, the inequalities $\omega(r^{s_m}) \leq K < \infty$ hold.
\end{proof}

\begin{definition} \label{def: ACCP-presentable}
	Let $M$ be an atomic monoid and $\sigma$ a binary relation on $\mathsf{Z}(M)$. 
	\begin{enumerate}
		\item Let $G_{\sigma}$ be the graph whose vertices are the $\mathcal{R}$-classes of $\mathsf{Z}(M)$ and whose edges are the pairs $(R, R')$ satisfying that $R \neq R'$ and $(z,z') \in \sigma \cup \sigma^{-1}$ for some $z \in R$ and $z' \in R'$.
		
		\item We say that $M$ is \emph{ACCP-presentable} provided that $\sigma' \subseteq \,\sim_M$ is a presentation of $M$ if and only if, for every $x \in M$, all the $\mathcal{R}$-classes in $\pi^{-1}(x)$ are in the same connected component of $G_{\sigma'}$.
	\end{enumerate}
\end{definition}

\begin{remark}
	Let $M_{r,S}$ be a nontrivial atomic exponential Puiseux semiring, and let $\sigma \subseteq \, \sim_{M_{r,S}}$. Using Proposition~\ref{prop:congruence generated description} and Proposition~\ref{prop: betti elements}, it is easy to prove that if $\sigma$ is a presentation of $M_{r,S}$ then, for every $x \in M_{r,S}$, all the $\mathcal{R}$-classes in $\pi^{-1}(x)$ are in the same connected component of $G_{\sigma}$; we leave the details to the reader. Consequently, proving that $M_{r,S}$ is ACCP-presentable amounts to verify that, for every $\sigma' \subseteq \sim_{M_{r,S}}$, if all the $\mathcal{R}$-classes in $\pi^{-1}(x)$ are in the same connected component of $G_{\sigma'}$ for every $x \in M_{r,S}$ then $\sigma'$ is a presentation of $M_{r,S}$.
\end{remark}

It is well known that a monoid satisfying the ACCP is ACCP-presentable (\cite[Theorem~1]{BG12}). Next we show that these two properties characterize the atomic exponential Puiseux semirings with finite omega functions.

\begin{theorem}\label{th: finite local primality} 
	Let $M_{r,S}$ be an atomic exponential Puiseux semiring. The following statements are equivalent.
	\begin{enumerate}
		\item $r \geq 1$.
		\item $M_{r,S}$ satisfies the ACCP.
		\item $M_{r,S}$ is ACCP-presentable.
		\item $\omega(a) < \infty$ for all $a \in \mathcal{A}(M_{r,S})$.
	\end{enumerate}
\end{theorem}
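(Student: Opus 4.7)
The plan is to establish the theorem via the cycle (1) $\Rightarrow$ (2) $\Rightarrow$ (3) $\Rightarrow$ (1) together with the equivalence (1) $\Leftrightarrow$ (4), which together give that all four conditions are equivalent. The implication (2) $\Rightarrow$ (3) is a direct application of \cite[Theorem~1]{BG12}, so the remaining implications (1) $\Rightarrow$ (2), (3) $\Rightarrow$ (1), (1) $\Rightarrow$ (4), and (4) $\Rightarrow$ (1) carry the technical content.

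For (1) $\Rightarrow$ (2), when $r \geq 1$ every atom $r^{s_k}$ is at least $1$, so every nonzero element of $S_{r,N}$ is bounded below by $1$; an ascending chain of principal ideals $x_1 + S_{r,N} \subsetneq x_2 + S_{r,N} \subsetneq \cdots$ corresponds to a strictly decreasing sequence of elements with consecutive differences in $S_{r,N}^{\bullet} \subseteq [1, \infty)$ and must terminate. For (1) $\Rightarrow$ (4) the case $r = 1$ is immediate, and for $r > 1$ the argument of Lemma~\ref{lemma: elements of truncated rational cyclic semirings have finite omega function} extends to every atom $r^{s_k}$: given $x$ with $r^{s_k} \mid_{S_{r,N}} x$ and a factorization $z = \sum c_i r^{s_i}$, Lemma~\ref{lemma: x dividing the set of smaller atoms} forces $r^{s_k}$ to divide either the prefix or the suffix of $z$; in the suffix case Lemma~\ref{lemma: algebraic manipulations for r > 1}(5) produces some $j > k$ with $c_j \geq \mathsf{d}(r)$ and hence $r^{s_k} \mid \mathsf{d}(r) r^{s_j}$, whereas in the prefix case one obtains a sub-factorization of length at most $\sum_{i = 0}^{k-1} \mathsf{n}(r)^{s_k - s_i}$ divisible by $r^{s_k}$. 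This yields the uniform bound $\omega(r^{s_k}) \leq \max\bigl(\mathsf{d}(r),\, \sum_{i = 0}^{k-1} \mathsf{n}(r)^{s_k - s_i}\bigr) < \infty$.

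For (4) $\Rightarrow$ (1) I argue by contrapositive: assume $r < 1$, so that $\mathsf{n}(r), \mathsf{d}(r) \geq 2$ by Proposition~\ref{prop:atomicity of exponential PMs}. I show $\omega(r^{s_0}) = \infty$ by a metric argument. For each $k \in \nn$ let $y_k = \mathsf{n}(r)^{s_k - s_0} r^{s_0} = \mathsf{d}(r)^{s_k - s_0} r^{s_k}$, which is divisible by $r^{s_0}$ via the first expression; consider the factorization $z_k = \mathsf{d}(r)^{s_k - s_0} \cdot r^{s_k}$ of $y_k$. Any sub-factorization of $z_k$ of length $j$ equals $j \cdot r^{s_k}$, and whenever $j < (\mathsf{d}(r)/\mathsf{n}(r))^{s_k - s_0} = r^{s_0 - s_k}$ we have $j r^{s_k} < r^{s_0}$, which already forbids the divisibility $r^{s_0} \mid_{S_{r,N}} j r^{s_k}$. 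Since $(\mathsf{d}(r)/\mathsf{n}(r))^{s_k - s_0} \to \infty$ with $k$, no finite bound for $\omega(r^{s_0})$ is possible.

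For (3) $\Rightarrow$ (1), again by contrapositive, assume $r < 1$. I construct $\sigma' \subseteq {\sim}_{S_{r,N}}$ that meets the connectivity condition of Definition~\ref{def: ACCP-presentable} but fails to be a presentation. For each Betti element $\mathsf{n}(r)^{\delta_k} r^{s_k}$ identified by Proposition~\ref{prop: betti elements}, I include the non-canonical bridging pair
\[
\bigl(\mathsf{n}(r)^{\delta_k} r^{s_k},\ (\mathsf{d}(r)^{\delta_k} - \mathsf{n}(r)^{\delta_{k+1}})\, r^{s_{k+1}} + \mathsf{d}(r)^{\delta_{k+1}} r^{s_{k+2}}\bigr),
\]
obtained by performing a further canonical rewrite at index $k+1$ on the standard right component $\mathsf{d}(r)^{\delta_k} r^{s_{k+1}}$; the finitely many $k$ with $\mathsf{d}(r)^{\delta_k} < \mathsf{n}(r)^{\delta_{k+1}}$ admit a compensating iteration of canonical rewrites. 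By Proposition~\ref{prop: betti elements}, only Betti elements have multiple $\mathcal{R}$-classes, and each chosen pair provides a direct edge between the two $\mathcal{R}$-classes of its Betti element in $G_{\sigma'}$, so the connectivity condition is met. However, tracking $(\sigma')^t$-orbits of the canonical factorizations reveals a parity-type split: each application of $\sigma'$ advances the support of a factorization by two indices at once, so the orbit of $\mathsf{n}(r)^{\delta_k} r^{s_k}$ and that of $\mathsf{d}(r)^{\delta_k} r^{s_{k+1}}$ remain disjoint. Thus ${\sigma'}^{\sharp}$ is properly contained in ${\sim}_{S_{r,N}}$, and $S_{r,N}$ is not ACCP-presentable. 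The main obstacle is precisely this orbit analysis: sustaining the parity split across arbitrary gap patterns $\{\delta_k\}$ requires careful tracking of which coefficients cross the thresholds needed to apply $\sigma'$ or its inverse at intermediate indices.
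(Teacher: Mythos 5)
Your overall strategy (the cycle together with the equivalence of (1) and (4)) matches the paper's, and two of your implications are genuinely self-contained where the paper cites: your argument for (1) $\Rightarrow$ (2) via the uniform lower bound $S_{r,N}^{\bullet} \subseteq [1,\infty)$, and your metric argument for (4) $\Rightarrow$ (1) using $y_k = \mathsf{n}(r)^{s_k - s_0}r^{s_0} = \mathsf{d}(r)^{s_k - s_0}r^{s_k}$, are both correct and pleasant. However, there are two genuine gaps.

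For (1) $\Rightarrow$ (4), your claim that Lemma~\ref{lemma: elements of truncated rational cyclic semirings have finite omega function} ``extends to every atom $r^{s_k}$'' with bound $\max\bigl(\mathsf{d}(r),\sum_{i<k}\mathsf{n}(r)^{s_k-s_i}\bigr)$ is false, because the suffix step ``$c_j \geq \mathsf{d}(r)$ hence $r^{s_k}\mid_{S_{r,N}}\mathsf{d}(r)r^{s_j}$'' breaks down when $j-1 > k$ and the gaps between $k$ and $j$ exceed $1$. Concretely, take $r = 3/2$ and $N = \{0\}\cup\nn_{\geq 2}$, so $s_0=0$, $s_1=2$, $\delta_0 = 2$. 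Then $r^{s_0}=1$ divides $4r^{s_1}=9$, but one checks that $1$ divides none of $r^{s_1}=9/4$, $2r^{s_1}=9/2$, $3r^{s_1}=27/4$ in $S_{r,N}$ (e.g.\ $9/2 - 1 = 7/2 \notin S_{r,N}$); hence $\omega(r^{s_0})\geq 4 > \mathsf{d}(r)=2$, contradicting your bound. This is exactly why the paper only proves $\omega(r^{s_m}) < \infty$ for $s_m = F(N)+1$ (where all subsequent $\delta_j = 1$, so the suffix argument with threshold $\mathsf{d}(r)$ is legitimate), and then propagates to the remaining atoms using the monotonicity and sub-additivity of $\omega$ from \cite[Lemma~3.3]{GH08} combined with the divisibilities $r^{s_n}\mid \mathsf{n}(r)^{s_n - s_m}r^{s_m}$ and $r^{s_n}\mid\mathsf{d}(r)^{s_m - s_n}r^{s_m}$.

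For (3) $\Rightarrow$ (1), your $\sigma'$ uses a two-step bridging pair at every Betti element, but as you note, the coefficient $\mathsf{d}(r)^{\delta_k} - \mathsf{n}(r)^{\delta_{k+1}}$ is negative for some $k$ (e.g.\ $r = 2/3$, $N = \{0,1\}\cup\nn_{\geq 4}$ gives $\mathsf{d}(r)^{\delta_0} - \mathsf{n}(r)^{\delta_1} = 3 - 8 < 0$, and in fact $\mathsf{Z}(\mathsf{n}(r)^{\delta_0}r^{s_0}) = \{2r^{s_0}, 3r^{s_1}\}$ has no further factorizations to reach); the ``compensating iteration'' you allude to is never specified, and the final ``parity-type split'' is only asserted --- you yourself flag the orbit analysis as the main obstacle. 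The paper sidesteps both difficulties: it takes $\sigma = \rho\cup\gamma$ with $\rho$ consisting of two-step pairs only for exponents $> F(N)$ (where every gap is $1$, so the intermediate coefficient is the positive integer $(q-1)\mathsf{n}(r)+c = \mathsf{d}(r)-\mathsf{n}(r)$) and $\gamma$ the canonical pairs for the finitely many small Betti elements; it then tests the canonical relation $(\mathsf{n}(r)r^k, \mathsf{d}(r)r^{k+1})$ for $k$ so large that $\mathsf{n}(r)r^k < r^{s_n}$ for all $n < m$, which forces any chain in $\sigma^\sharp$ to lie inside $\rho^\sharp$, and finally uses the length invariant $\bigl||z|-|z'|\bigr| \in 2(\mathsf{d}(r)-\mathsf{n}(r))\cdot\nn$ for $(z,z')\in(\rho^t)^e$ to show this relation is absent. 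If you want to keep your ``two-step everywhere'' construction, the cleanest route is to exhibit an explicit linear functional $\ell(z)=\sum w_iz_i$ preserved by $\sigma'$ but not by $\sim_{S_{r,N}}$; the recursion $u_k := \mathsf{n}(r)^{\delta_k}w_k - \mathsf{d}(r)^{\delta_k}w_{k+1}$ with $u_k + u_{k+1}=0$ (for two-step indices) and $u_k = 0$ (for canonical patches) admits a non-vanishing solution supported on the tail $k\geq m$, but this should be written out rather than asserted.
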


\begin{proof}
	The statements $(1)$ and $(2)$ are equivalent by virtue of \cite[Proposition~4.5]{fG16} and \cite[Corollary~4.2]{SAJBHP2020}, while $(4)$ implies $(2)$ follows from \cite[Lemma~3.3]{GH08}. 
	
	Now we proceed to argue that $(1)$ implies $(4)$. If $r \in \nn$ then this implication trivially holds. Consequently, we may assume that $M_{r,S}$ is nontrivial. Let $m \in \nn$ such that $s_m = F(S) + 1$, and fix $n \in \nn$. By Lemma~\ref{lemma: elements of truncated rational cyclic semirings have finite omega function}, the inequality $\omega(r^{s_k}) < \infty$ holds for each $k \geq m$; consequently, we may assume that $n < m$. Then $r^{s_n} \,|_{M_{r,S}}\, \mathsf{d}(r)^{s_m - s_n}r^{s_m}$ but, for every positive integer $k$, we have that $\omega(k\cdot r^{s_m}) \leq k\cdot \omega(r^{s_m}) < \infty$ by \cite[Lemma~3.3]{GH08} and Lemma~\ref{lemma: elements of truncated rational cyclic semirings have finite omega function}. Applying \cite[Lemma~3.3]{GH08} again, we obtain that $\omega(r^{s_n}) < \infty$, and our argument concludes.
	
	As we mentioned above, a monoid satisfying the ACCP is ACCP-presentable by \cite[Theorem~1]{BG12}. Next we prove that $M_{r,S}$ is not ACCP-presentable when $r < 1$. We can write $\mathsf{d}(r)$ as $\mathsf{d}(r) = q\cdot\mathsf{n}(r) + c$ with $q \in \nn$ and $1 \leq c < \mathsf{n}(r)$. Since $r < 1$, the inequality $q \geq 1$ holds. As before, let $m \in \nn$ such that $s_m = F(S) + 1$. Now let
	\[
		\rho = \left\{\big(\mathsf{n}(r)r^n\!, \,\mathsf{d}(r)r^{n + 2} + ((q - 1)\mathsf{n}(r) + c)r^{n + 1}\big) \,\big|\, n \in \nn_{>F(S)}\right\} \subseteq \mathsf{Z}(M_{r,S}) \times \mathsf{Z}(M_{r,S}),
	\]
	where $F(S)$ represents the Frobenius number of the numerical monoid $S$\!, and let
	\[
		\gamma = \left\{\left(\mathsf{n}(r)^{\delta_n}r^{s_n}\!, \mathsf{d}(r)^{\delta_n}r^{s_{n + 1}}\right) \,\big|\, n \in \nn_{<m}\right\}\subseteq \mathsf{Z}(M_{r,S}) \times \mathsf{Z}(M_{r,S}).
	\]
	Take $\sigma = \rho \cup \gamma$. Obviously, the inclusion $\sigma \subseteq \,\sim_{M_{r,S}}$ holds. Note that, for each $x \in M_{r,S}$, all the $\mathcal{R}$-classes in $\pi^{-1}(x)$ are in the same connected component of $G_{\sigma}$ by Proposition~\ref{prop: betti elements}. Let $k \in \nn$ such that $k \geq s_m$ and $\mathsf{n}(r)r^k < r^{s_n}$ for each $n \in \llbracket 0, m - 1 \rrbracket$. Clearly, if $(\mathsf{n}(r)r^k, \mathsf{d}(r)r^{k + 1}) \in \sigma^\sharp$ then $(\mathsf{n}(r)r^k, \mathsf{d}(r)r^{k + 1}) \in \rho^\sharp$. However, it is easy to see that if $(z,z') \in \rho$ then $\big||z| - |z'|\big| = 2\cdot(\mathsf{d}(r) - \mathsf{n}(r))$, and the same is true for any pair in $\rho^t$; consequently, if $(z, z') \in (\rho^t)^e$ then $\big||z| - |z'|\big| = 2s(\mathsf{d}(r) - \mathsf{n}(r))$ for some $s \in \nn$. Hence $(\mathsf{n}(r)r^k, \mathsf{d}(r)r^{k + 1}) \not \in \rho^\sharp$. Therefore, $\sigma$ is not a presentation of $M_{r,S}$ which, in turn, implies that $M_{r,S}$ is not ACCP-presentable.
\end{proof}

An atomic monoid $M$ is called \emph{locally} (resp. \emph{globally}) \emph{tame} provided that $\mathsf{t}(a) < \infty$ (resp. $\mathsf{t}(M) < \infty$) for every $a \in \mathcal{A}(M)$. We can now describe the atomic exponential Puiseux semirings that are locally (or globally) tame, but first we need to introduce a definition.

\begin{definition} 
	Let $M$ be an atomic monoid, and let $x$ be an element of $M$. For $n \in \nn^{\bullet}$\!, we denote by $\mathsf{Z}_{\min}(n,x)$ the set of all factorizations $z \in \mathsf{Z}(M)$ with length at most $n$ satisfying that the smallest sub-factorization of $z$ divisible by $x$ in $M$ is precisely $z$. In addition, set
	\[
	\tau(x) \coloneqq \sup_n \sup_z\left\{ \min\mathsf{L}\left( \pi(z) - x \right) \mid z \in \mathsf{Z}_{\min}(n,x) \right\}\!.
	\]
\end{definition}

\begin{cor} \label{cor: nontrivial exponential Puiseux semirings are not locally tame}
	Any nontrivial atomic exponential Puiseux monoid is neither locally tame nor globally tame.
\end{cor}

\begin{proof}
	Let $M_{r,S}$ be a nontrivial atomic exponential Puiseux semiring. By virtue of \cite[Theorem~3.2]{FGCO}, we have $\rho(M_{r,S}) = \infty$, which implies that $M_{r,S}$ is not globally tame by \cite[Theorem~1.6.6]{GH06b}. Now if $r < 1$ then $M_{r,S}$ is not locally tame by Theorem~\ref{th: finite local primality} and \cite[Theorem~3.6]{GH08}. On the other hand, if $r > 1$ then it is straightforward to adapt the argument used in the proof of \cite[Theorem~5.6]{CGG19} to show that the equality $\tau(r^{F(S) + 1}) = \infty$ holds, so we leave this task to the reader. Our result then follows from \cite[Theorem~3.6]{GH08}. 
\end{proof}

The omega function of a Puiseux monoid satisfying the ACCP is not, in general, a finite function as the following example illustrates.

\begin{example}
	Let $M = \langle \frac{p - 1}{p} \mid p \in \mathbb{P} \rangle$. It is easy to see that $\mathcal{A}(M) = \{\frac{p - 1}{p} \mid p \in \mathbb{P}\}$, which implies that $M$ is atomic. In fact, the monoid $M$ is a BFM by \cite[Proposition~4.5]{fG16}. Let us fix $p \in \mathbb{P}$. A straightforward computation shows that, for $q \in \mathbb{P}\setminus\{p\}$ and $n \in \nn$, if $(p - 1)/p \,\,|_{M}\,\, n(q - 1)/q$ then the inequality $n \geq q$ holds. Consequently, $\omega((p - 1)/p) \geq q$ for all $q \in \mathbb{P}\setminus\{p\}$. In other words, we have $\omega((p - 1)/p) = \infty$ for each $p \in \mathbb{P}$.
\end{example}

\section{Acknowledgments}

The author wants to thank Felix Gotti and an anonymous referee for valuable feedback that helped improve the quality of this manuscript. While working on the same, the author was generously supported by the CAM Summer Research Fellowship.

\bigskip

\end{document}